\newtheorem{theorem}{Theorem}[section]
\newtheorem{corollary}[theorem]{Corollary}
\newtheorem{lemma}[theorem]{Lemma}
\newtheorem{proposition}[theorem]{Proposition}
\newcommand{\ie}{\emph{i.e.\ }} 
\newcommand{\f}{\varphi}
\newcommand{\tg}{\tilde{g}}
\newcommand{\n}{\nabla}
\newcommand{\nn}{\tilde{\n}}
\newcommand{\M}{(M,\f,\xi,\eta,g)}
\newcommand{\G}{\mathcal{G}}
\newcommand{\I}{\mathcal{I}}
\newcommand{\W}{\mathcal{W}}
\newcommand{\R}{\mathbb R}
\newcommand{\X}{\mathfrak X}
\newcommand{\F}{\mathcal{F}}
\newcommand{\ta}{\theta}
\newcommand{\om}{\omega}
\newcommand{\lm}{\lambda}
\newcommand{\im}{\operatorname{im}}
\newcommand{\Span}{\operatorname{span}}
\newcommand{\Id}{\operatorname{Id}}
\newcommand{\thmref}[1]{Theorem~\ref{#1}}
\newcommand{\lemref}[1]{Lemma~\ref{#1}}
\newcommand{\coref}[1]{Corollary~\ref{#1}}
\newcommand{\propref}[1]{Proposition~\ref{#1}}
\begin{document}

\title{On the Structure Tensors of Almost Contact B-Metric Manifolds}

\author{Hristo Manev}
\address{Medical University of Plovdiv, Faculty of Pharmacy,
Department of Pharmaceutical Sciences, 15-A Vasil Aprilov Blvd.
} \email{hmanev@uni-plovdiv.bg}
\newcommand{\AuthorNames}{H. Manev}



\thanks{This paper is partially supported by project NI13-FMI-002
of the Scientific Research Fund, Plovdiv University, Bulgaria 
}

\begin{abstract}
The space of the structure (0,3)-tensors of the covariant
derivatives of the structure endomorphism and the metric on almost
contact B-metric manifolds is considered. A known decomposition of
this space in orthogonal and invariant subspaces with respect to
the action of the structure group is used. We determine the
corresponding components of the structure tensor and consider the
case of the lowest dimension 3 of the studied manifolds. Some
examples are commented.
\end{abstract}

\keywords{almost contact manifold, B-metric, structure tensor}
\subjclass[2010]{Primary 53C15, 53C50; Secondary 53D15}

\maketitle



\section*{Introduction}\label{sec-intro}

The idea of decomposition of the space of the structure
(0,3)-tensors, generated by the covariant derivative of the
fundamental tensor of type $(1,1)$, is used by different authors
in order to  obtain classifications of manifolds with additional
tensor structures. For example, let us mention the classification
of almost Hermitian manifolds given in \cite{GrHe}, of almost
complex manifolds with Norden metric -- in \cite{GaBo}, of almost
contact metric manifolds -- in \cite{AlGa}, of almost contact
manifolds with B-metric -- in \cite{GaMiGr}, of Riemannian almost
product manifolds -- in \cite{Nav}, of Riemannian manifolds with
traceless almost product structure -- in \cite{StaGri}, of almost
paracontact metric manifolds -- in \cite{NakZam}, of almost
paracontact Riemannian manifolds of type $(n,n)$ -- in
\cite{ManSta01}.

The almost contact structure is well studied, especially in the
case when it is equipped with a compatible Riemannian (or
pseudo-Riemannian) metric, \ie the almost contact endomorphism
$\f$ acts as an isometry with respect to the metric in each
tangent fibre of the contact distribution $H=\ker(\eta)$, where
$\eta$ is the contact 1-form. An indefinite counterpart is the
almost contact B-metric structure, \ie $\f$ acts as an
anti-isometry with respect to B-metric in $H$.

The goal of this work is the description of the structure tensor
generated by the covariant derivative of $\f$ and the B-metric by
its components in the different basic classes of the
classification of the almost contact B-metric manifolds made by G.
Ganchev, V. Mihova, K. Gribachev in \cite{GaMiGr}. The case of the
lowest dimension 3 of the studied manifolds is considered and it
is establish that four of the basic classes are restricted to the
spacial class with zero structure tensors.

The paper is organized as follows. In Sect.~\ref{sec-mfds} we
recall some facts about the almost contact manifolds with
B-metric. 
In Sect.~\ref{sec-dec} we decompose the vector space of the
structure tensors on the considered manifolds.
In Sect.~\ref{sec-com3} we deduce the components of the structure
tensor in the case of the lowest dimension 3.
In Sect.~\ref{sec-exm} we comment some examples in relation with
the above investigations.




\section{Almost contact manifolds with B-metric}\label{sec-mfds}

Let $(M,\f,\xi,\eta,g)$ be an almost contact manifold with
B-met\-ric or an \emph{almost contact B-metric manifold}, \ie $M$
is a $(2n+1)$-dimensional differentiable manifold with an almost
contact structure $(\f,\xi,\eta)$ consisting of an endomorphism
$\f$ of the tangent bundle, a vector field $\xi$, its dual 1-form
$\eta$ as well as $M$ is equipped with a pseu\-do-Rie\-mannian
metric $g$  of signature $(n+1,n)$, such that the following
algebraic relations are satisfied: \cite{GaMiGr}
\begin{equation}\label{str}
\begin{array}{c}
\f\xi = 0,\quad \f^2 = -\Id + \eta \otimes \xi,\quad
\eta\circ\f=0,\quad \eta(\xi)=1,\quad 
g(\f x, \f y) = - g(x,y) + \eta(x)\eta(y)
\end{array}
\end{equation}
for any $x$, $y$ of the algebra $\X(M)$ of the smooth vector
fields on $M$. Further $x$, $y$, $z$ will stand for arbitrary
elements of $\X(M)$ or vectors in the tangent space $T_pM$ at
$p\in M$.

The associated metric $\tg$ of $g$ on $M$ is defined by
\(\tg(x,y)=g(x,\f y)+\eta(x)\eta(y)\).  The manifold
$(M,\f,\xi,\eta,\tg)$ is also an almost contact B-metric manifold.
Both metrics $g$ and $\tg$ are necessarily of signature $(n+1,n)$.
The Levi-Civita connection of $g$ and $\tg$ will be denoted by
$\n$ and $\nn$, respectively.

Let us denote the structure group of $\M$ by $\G$. It is
determined by $\G=\mathcal{O}(n;\mathbb{C})\times\I$, where $\I$
is the identity on $\Span(\xi)$ and
$\mathcal{O}(n;\mathbb{C})=\mathcal{GL}(n;\mathbb{C})\cap
\mathcal{O}(n,n)$, \ie $\G$ consists of the real square matrices
of order $2n+1$ of the following type
\[
\left(%
\begin{array}{r|c|c}
  A & B & \vartheta^\top\\ \hline
  -B & A & \vartheta^\top\\ \hline
  \vartheta & \vartheta & 1 \\
\end{array}%
\right),\qquad %
\begin{array}{l}
  A^\top A-B^\top B=I_n,\\%
  B^\top A+A^\top B=O_n,
\end{array}%
\quad A, B\in \mathcal{GL}(n;\mathbb{R}),
\]
where $\vartheta$ and its transpose $\vartheta^\top$ are the zero row
$n$-vector and the zero column $n$-vector; $I_n$ and $O_n$ are the
unit matrix and the zero matrix of size $n$, respectively.

A classification of the almost contact B-metric manifolds is given
in \cite{GaMiGr}. This classification, consisting of eleven basic
classes $\F_1$, $\F_2$, $\dots$, $\F_{11}$,  is made with respect
to the tensor  $F$ of type (0,3) defined by
\begin{equation}\label{F=nfi}
F(x,y,z)=g\bigl( \left( \nabla_x \f \right)y,z\bigr),
\end{equation}
which have the following properties
\begin{equation}\label{F-prop}
F(x,y,z)=F(x,z,y)=F(x,\f y,\f z)+\eta(y)F(x,\xi,z)
+\eta(z)F(x,y,\xi).
\end{equation}

The intersection of the basic classes is the special class $\F_0$
determined by the condition $F(x,y,z)=0$. Hence $\F_0$ is the
class of the almost contact B-metric manifolds with $\n$-parallel
structures, \ie $\n\f=\n\xi=\n\eta=\n g=\n \tg=0$.

If $\left\{e_i;\xi\right\}$ $(i=1,2,\dots,2n)$ is a basis of
$T_pM$ and $\left(g^{ij}\right)$ is the inverse matrix of the
matrix $\left(g_{ij}\right)$ of $g$, then the following 1-forms
are associated with $F$:
\begin{equation}\label{t}
\theta(z)=g^{ij}F(e_i,e_j,z),\quad
\theta^*(z)=g^{ij}F(e_i,\f e_j,z), \quad \omega(z)=F(\xi,\xi,z).
\end{equation}
These 1-forms are known also as the Lee forms. Obviously, the identities
$\om(\xi)=0$ and $\ta^*\circ\f=-\ta\circ\f^2$ are always valid.

Further we use the following characteristic conditions of the
basic classes: \cite{Man8}
\begin{equation}\label{Fi}
\begin{array}{rl}
\F_{1}: &F(x,y,z)=\frac{1}{2n}\bigl\{g(x,\f y)\ta(\f z)+g(\f x,\f
y)\ta(\f^2 z)+g(x,\f z)\ta(\f y)+g(\f x,\f z)\ta(\f^2 y)
\bigr\};\\[4pt]
\F_{2}: &F(\xi,y,z)=F(x,\xi,z)=0,\quad
              F(x,y,\f z)+F(y,z,\f x)+F(z,x,\f y)=0,\quad \ta=0;\\[4pt]
\F_{3}: &F(\xi,y,z)=F(x,\xi,z)=0,\quad
              F(x,y,z)+F(y,z,x)+F(z,x,y)=0;\\[4pt]
\F_{4}: &F(x,y,z)=-\frac{1}{2n}\ta(\xi)\bigl\{g(\f x,\f y)\eta(z)+g(\f x,\f z)\eta(y)\bigr\};\\[4pt]
\F_{5}: &F(x,y,z)=-\frac{1}{2n}\ta^*(\xi)\bigl\{g( x,\f y)\eta(z)+g(x,\f z)\eta(y)\bigr\};\\[4pt]
\F_{6}: &F(x,y,z)=F(x,y,\xi)\eta(z)+F(x,z,\xi)\eta(y),\quad \\[4pt]
                &F(x,y,\xi)=F(y,x,\xi)=-F(\f x,\f y,\xi),\quad \ta=\ta^*=0; \\[4pt]
\F_{7}: &F(x,y,z)=F(x,y,\xi)\eta(z)+F(x,z,\xi)\eta(y),\quad 
                F(x,y,\xi)=-F(y,x,\xi)=-F(\f x,\f y,\xi); \\[4pt]
\F_{8}: &F(x,y,z)=F(x,y,\xi)\eta(z)+F(x,z,\xi)\eta(y),\quad 
                F(x,y,\xi)= F(y,x,\xi)=F(\f x,\f y,\xi); \\[4pt]
\F_{9}: &F(x,y,z)=F(x,y,\xi)\eta(z)+F(x,z,\xi)\eta(y),\quad 
                F(x,y,\xi)=-F(y,x,\xi)=F(\f x,\f y,\xi); \\[4pt]
\F_{10}: &F(x,y,z)=F(\xi,\f y,\f z)\eta(x); \\[4pt]
\F_{11}:
&F(x,y,z)=\eta(x)\left\{\eta(y)\om(z)+\eta(z)\om(y)\right\}.
\end{array}
\end{equation}

\section{A decomposition of the space of the structure tensors}\label{sec-dec}

Let us consider $T_p{M}$ at arbitrary $p\in M$ as a
$(2n+1)$-dimensional vector space equipped with almost contact
B-metric structure $(\f,\xi,\eta,g)$. Let $\F$ be the vector space
of all tensors $F$ of type (0,3) over $T_p{M}$ having properties
\eqref{F-prop}, \ie
\begin{equation}\label{Fspace}
\begin{split}
\mathcal{F} =\bigl\{F(x,y,z)\in \R \;|\; F(x,y,z)&=F(x,z,y)=F(x,\f
y,\f z)+\eta(y)F(x,\xi,z)+\eta(z)F(x,y,\xi)\bigr\}.
\end{split}
\end{equation}

The metric $g$ induces an inner product
$\langle\cdot,\cdot\rangle$ on $\F$ defined by
\[
\langle F',F''\rangle=g^{iq}g^{jr}g^{ks}\allowbreak{}
F'(e_i,e_j,e_k)\allowbreak{}F''(e_q,e_r,e_s)
\]
for any $F',F''\in\F$ and a basis $\left\{e_i\right\}$
$(i=1,2,\dots,2n+1)$ of $T_p{M}$.

The standard representation of the structure group $\G$
 in $T_p{M}$ induces a natural representation $\lm$ of $\G$ in
 $\F$ as follows
\begin{equation}\label{la}
\left((\lm a)F\right)(x,y,z)=F\left(a^{-1}x,a^{-1}y,a^{-1}z\right)
\end{equation}
 for any
$a\in \G$ and $F\in\F$, so that for $F',F''\in \F$
\[
\langle(\lm a)F',(\lm a)F''\rangle=\langle F',F''\rangle.
\]

The decomposition $x=-\f^2 x+\eta (x)\xi$ generates the projectors
$h$ and $v$ on $T_p{M}$ determined by $h(x)=-\f^2 x$ and
$v(x)=\eta (x)\xi$, having the properties $h \circ h = h$, $v
\circ v = v$, $h \circ v = v \circ h = 0$.

Therefore, we have the orthogonal decomposition $T_p{M}=h(T_p{M})
\oplus v(T_p{M})$.

Bearing in mind these projectors on $T_p{M}$, we construct a
partial decomposition of $\F$ as follows.

At first, we define the operator $p_1:\F\rightarrow\F$ by
\begin{equation}\label{p1}
p_1(F)(x,y,z)=-F(\f^2x,\f^2y,\f^2z).
\end{equation}
It is easy to check the following
\begin{lemma}\label{lem-p1}
The operator $p_1$ has the following properties:
\begin{itemize}
    \item[(i)]
    $p_1\circ p_1 = p_1$;
    \item[(ii)]
    $\langle p_1(F'),F''\rangle=\langle F',p_1(F'')\rangle,\quad F', F'' \in\F$;
    \item[(iii)]
    $p_1\circ (\lm a)=(\lm a)\circ p_1$.
\end{itemize}
\end{lemma}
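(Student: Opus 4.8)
The plan is to recast $p_1$ in terms of the horizontal projector $h=-\f^2$. Since $hx=-\f^2x$ and $F$ is trilinear, the definition \eqref{p1} reads $p_1(F)(x,y,z)=F(hx,hy,hz)$, so $p_1$ is simply precomposition of $F$ by $h$ in each of its three arguments. This reduces all three assertions to three elementary facts about $h$: that it is idempotent (already recorded as $h\circ h=h$), that it is $g$-self-adjoint, and that it commutes with every $a\in\G$. As a preliminary I would note the identity $\eta(x)=g(x,\xi)$, obtained by setting $y=\xi$ in $g(\f x,\f y)=-g(x,y)+\eta(x)\eta(y)$ and using $\f\xi=0$; it is exactly what makes $h$ self-adjoint.

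Assertion (i) is then immediate: $p_1(p_1(F))(x,y,z)=p_1(F)(hx,hy,hz)=F(h^2x,h^2y,h^2z)=F(hx,hy,hz)=p_1(F)(x,y,z)$ by $h\circ h=h$. Along the way I would also verify that $p_1$ really maps $\F$ into $\F$, so that the statement makes sense: the symmetry $p_1(F)(x,y,z)=p_1(F)(x,z,y)$ is inherited from $F$; the relation $h\xi=0$ gives $p_1(F)(x,\xi,z)=0$, which kills the two correction terms in \eqref{F-prop}; and $\eta\circ h=0$ together with $\f h=h\f=\f$ turns the defining relation of $F$ evaluated at $(hx,hy,hz)$ into $p_1(F)(x,\f y,\f z)=p_1(F)(x,y,z)$.

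For (ii) the crucial observation is that $h$ is self-adjoint with respect to $g$: using $hx=x-\eta(x)\xi$ and $g(\xi,y)=\eta(y)$ one gets $g(hx,y)=g(x,y)-\eta(x)\eta(y)$, which is symmetric in $x$ and $y$, hence $g(hx,y)=g(x,hy)$. In the contraction defining $\langle\cdot,\cdot\rangle$ this lets me slide one factor of $h$ off an argument of $p_1(F')$ and onto the matching argument of $F''$ through the corresponding copy of the inverse metric; performing this transfer in each of the three slots converts $\langle p_1(F'),F''\rangle$ into $\langle F',p_1(F'')\rangle$. In components, self-adjointness is the symmetry $g_{ai}h^a_j=g_{aj}h^a_i$, and this is precisely the identity needed to rewrite $g^{iq}h^a_i$ as $g^{ab}h^q_b$ under the summation. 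I expect this index bookkeeping to be the only genuinely delicate step of the lemma.

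Finally, for (iii) I would use that every $a\in\G$ preserves the almost contact structure, so $a\f=\f a$; this is transparent from the block form of the matrices in $\G$, whose $\left(\begin{smallmatrix}A&B\\-B&A\end{smallmatrix}\right)$ part is exactly the condition of commuting with the complex structure that $\f$ induces on $H=\ker(\eta)$. Consequently $a^{-1}$ commutes with $\f^2$, hence with $h$, and since $\bigl((\lm a)F\bigr)(x,y,z)=F(a^{-1}x,a^{-1}y,a^{-1}z)$, both $p_1\bigl((\lm a)F\bigr)$ and $(\lm a)\bigl(p_1(F)\bigr)$ evaluate to $F(h\,a^{-1}x,h\,a^{-1}y,h\,a^{-1}z)$, which establishes $p_1\circ(\lm a)=(\lm a)\circ p_1$.
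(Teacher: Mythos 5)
Your proof is correct: rewriting $p_1(F)(x,y,z)$ as $F(hx,hy,hz)$ with $h=-\f^2$ and then using that $h$ is idempotent, $g$-self-adjoint, and commutes with the $\G$-action is exactly the intended argument, and your auxiliary checks (that $p_1$ maps $\F$ into $\F$, via $h\xi=0$, $\eta\circ h=0$ and $\f h=h\f=\f$, and that $g(\xi,\cdot)=\eta$) are all sound. The paper states this lemma with no proof at all (``it is easy to check''), so there is nothing to compare against beyond confirming that your direct verification fills that gap correctly.
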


According to \lemref{lem-p1} we have the following orthogonal
decomposition of $\F$ by the image and the kernel of $p_1$:
\begin{equation}
\begin{array}{l}
\W_1=\im(p_1)=\{F\in\F\;|\; p_1(F)=F\}, \quad 
\W_1^\bot=\ker(p_1)=\{F\in\F\;|\; p_1(F)=0\}.
\end{array}
\end{equation}

Further, we consider the operator $p_2:
\W_1^\bot\rightarrow\W_1^\bot$ defined by
\begin{equation}\label{p2}
p_2(F)(x,y,z)
=\eta(y)F(\f^2{x},\xi,\f^2{z})
+\eta(z)F(\f^2{x},\f^2{y},\xi)
\end{equation}
for which we obtain
\begin{lemma}\label{lem-p2}
The operator $p_2$ has the following properties:
\begin{itemize}
    \item[(i)]
    $p_2\circ p_2 = p_2$;
    \item[(ii)]
    $\langle p_2(F'),F''\rangle=\langle F',p_2(F'')\rangle,\quad F', F'' \in\W_1^\bot$;
    \item[(iii)]
    $p_2\circ (\lm a)=(\lm a)\circ p_2$.
\end{itemize}
\end{lemma}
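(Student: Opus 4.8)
The plan is to mirror the treatment of $p_1$ in \lemref{lem-p1}, first isolating the algebraic identities on which all three parts rest. From \eqref{str} I would record: $\f^2\xi=0$ and $\eta\circ\f^2=0$ (so $\f^2$ maps into $H=\ker\eta$), $\f^4=-\f^2$, the self-duality $\eta(x)=g(x,\xi)$ (obtained by setting $y=\xi$ in the compatibility relation $g(\f x,\f y)=-g(x,y)+\eta(x)\eta(y)$), and the $g$-self-adjointness of $\f^2$, namely $g(\f^2 x,y)=-g(x,y)+\eta(x)\eta(y)=g(x,\f^2 y)$. For part~(iii) I would also note that every $a\in\G$ preserves the structure: reading off the displayed matrix form of $\G$ gives $a\xi=\xi$, $\eta\circ a=\eta$, and $\f\circ a=a\circ\f$, hence the same for $a^{-1}$ and for $\f^2$. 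As a preliminary I would confirm that $p_2$ is well defined, i.e. $p_2(F)\in\W_1^\bot$: membership in $\ker p_1=\W_1^\bot$ is immediate because $p_1(p_2(F))(x,y,z)=-p_2(F)(\f^2 x,\f^2 y,\f^2 z)$ carries the factors $\eta(\f^2 y)=\eta(\f^2 z)=0$, while the two defining properties \eqref{F-prop} for $p_2(F)$ follow from the symmetry $F(u,v,w)=F(u,w,v)$ together with $\eta\circ\f=0$ and $\f^2\xi=0$.

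For part~(i), I would substitute $p_2(F)$ into the definition \eqref{p2}. Writing $G=p_2(F)$, the inner evaluations are $G(\f^2 x,\xi,\f^2 z)$ and $G(\f^2 x,\f^2 y,\xi)$; expanding each by \eqref{p2} and using $\eta(\xi)=1$, $\eta\circ\f^2=0$, $\f^2\xi=0$, and $\f^4=-\f^2$ collapses the doubled $\f^2$'s (the two sign changes cancel) and annihilates the cross terms, leaving $G(\f^2 x,\xi,\f^2 z)=F(\f^2 x,\xi,\f^2 z)$ and $G(\f^2 x,\f^2 y,\xi)=F(\f^2 x,\f^2 y,\xi)$. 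Re-inserting these recovers exactly $p_2(F)(x,y,z)$, so $p_2\circ p_2=p_2$.

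For part~(ii) --- the most computational step and the one I expect to be the main obstacle --- I would expand $\langle p_2(F'),F''\rangle$ in the given basis and treat the two summands of \eqref{p2} separately. In the first summand the weight $\eta(e_j)=g(e_j,\xi)$ contracts against $g^{jr}$ to replace the middle argument of $F''$ by $\xi$ (formally $\sum_{j,r} g^{jr}\eta(e_j)\,F''(e_q,e_r,e_s)=F''(e_q,\xi,e_s)$), turning the term into $g^{iq}g^{ks}F'(\f^2 e_i,\xi,\f^2 e_k)F''(e_q,\xi,e_s)$. Then the $g$-self-adjointness of $\f^2$ lets me transfer each $\f^2$ across its metric contraction, using $\sum_{i,q} g^{iq}(\f^2 e_i)\otimes e_q=\sum_{i,q} g^{iq} e_i\otimes(\f^2 e_q)$ --- which is just the statement that the lowered $\f^2$ is a symmetric tensor --- and so moving both $\f^2$'s onto the $F''$ slots to obtain $g^{iq}g^{ks}F'(e_i,\xi,e_k)F''(\f^2 e_q,\xi,\f^2 e_s)$. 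This is precisely the first summand of $\langle F',p_2(F'')\rangle$; the second summand is handled identically with $\xi$ in the last slot, yielding $\langle p_2(F'),F''\rangle=\langle F',p_2(F'')\rangle$.

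Finally, for part~(iii) I would compute directly. By \eqref{la} we have $\bigl((\lm a)p_2(F)\bigr)(x,y,z)=p_2(F)(a^{-1}x,a^{-1}y,a^{-1}z)$; expanding by \eqref{p2} and using $\eta\circ a^{-1}=\eta$, $a^{-1}\xi=\xi$, and $\f^2\circ a^{-1}=a^{-1}\circ\f^2$ rewrites every argument as $a^{-1}$ applied to $\f^2 x,\xi,\f^2 z$ (respectively $\f^2 x,\f^2 y,\xi$), so that each factor becomes $\bigl((\lm a)F\bigr)$ evaluated on the unmodified arguments. Recognizing the outcome as $p_2\bigl((\lm a)F\bigr)(x,y,z)$ gives $(\lm a)\circ p_2=p_2\circ(\lm a)$. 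Throughout, the only place where genuine care is needed is the bookkeeping of the two $g$-contractions in~(ii); parts~(i) and~(iii) are formal once the structure identities above are in hand.
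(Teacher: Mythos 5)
Your proof is correct: the well-definedness check, the collapse of $\f^4=-\f^2$ with cancelling signs in (i), the contraction $\sum_{j,r}g^{jr}\eta(e_j)e_r=\xi$ together with the $g$-self-adjointness of $\f^2$ in (ii), and the commutation of $a^{-1}$ with $\f$, $\xi$, $\eta$ in (iii) are all exactly the identities needed. The paper states this lemma without proof (as an "easy to check" assertion), and your direct verification is precisely the argument it leaves to the reader.
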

Then, bearing in mind  \lemref{lem-p2}, we obtain
\begin{equation}
\begin{array}{l}
\W_2=\im(p_2)=\left\{F\in\W_1^\bot\;|\; p_2(F)=F\right\}, \quad 
\W_2^\bot=\ker(p_2)=\left\{F\in\W_1^\bot\;|\; p_2(F)=0\right\}.
\end{array}
\end{equation}

Finally, we consider the operator $p_3:
\W_2^\bot\rightarrow\W_2^\bot$ defined by
\[
p_3(F)(x,y,z)
=\eta(x)F(\xi,\f^2{y},\f^2{z})
\]
and we get the following
\begin{lemma}\label{lem-p3}
The operator $p_3$ has the properties:
\begin{itemize}
    \item[(i)]
    $p_3\circ p_3 = p_3$;
    \item[(ii)]
    $\langle p_3(F'),F''\rangle=\langle F',p_3(F'')\rangle,\quad F', F'' \in\W_2^\bot$;
    \item[(iii)]
    $p_3\circ (\lm a)=(\lm a)\circ p_3$.
\end{itemize}
\end{lemma}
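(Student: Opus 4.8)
The plan is to follow the same pattern as in the proofs of \lemref{lem-p1} and \lemref{lem-p2}, reducing each of the three claims to the basic identities of the structure. From \eqref{str} one first records the auxiliary relations $\f^4=-\f^2$, $\eta\circ\f^2=0$, $\f^2\xi=0$ and $\eta(\xi)=1$; moreover, putting $y=\xi$ in the last equation of \eqref{str} gives $\eta=g(\cdot,\xi)$, and a direct computation shows that $\f^2$ is $g$-self-adjoint, i.e. $g(\f^2x,y)=g(x,\f^2y)$. Finally, every $a\in\G$ fixes $\xi$, preserves $\eta$ and commutes with $\f$, so that $a^{-1}\xi=\xi$, $\eta\circ a^{-1}=\eta$ and $a^{-1}\circ\f^2=\f^2\circ a^{-1}$. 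These are the only facts I will use.

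For (i) I substitute the definition of $p_3$ into itself. Writing $G=p_3(F)$, I obtain $p_3(G)(x,y,z)=\eta(x)\,\eta(\xi)\,F(\xi,\f^4y,\f^4z)$. Using $\eta(\xi)=1$ and $\f^4=-\f^2$, the two sign factors coming from $\f^4y$ and $\f^4z$ cancel, so $p_3(G)(x,y,z)=\eta(x)F(\xi,\f^2y,\f^2z)=p_3(F)(x,y,z)$, which is the claimed idempotency.

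For (iii) I expand both composites through \eqref{la}. On one side $\bigl(p_3\circ(\lm a)\bigr)(F)(x,y,z)=\eta(x)\,F(a^{-1}\xi,\f^2a^{-1}y,\f^2a^{-1}z)$, where I have already commuted $a^{-1}$ past $\f^2$; on the other side $\bigl((\lm a)\circ p_3\bigr)(F)(x,y,z)=\eta(a^{-1}x)\,F(\xi,\f^2a^{-1}y,\f^2a^{-1}z)$. The two group identities $a^{-1}\xi=\xi$ and $\eta\circ a^{-1}=\eta$ convert the first expression into the second, which proves equivariance.

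The self-adjointness (ii) is the main point and the only place where genuine bookkeeping occurs. Starting from $\langle p_3(F'),F''\rangle=g^{iq}g^{jr}g^{ks}\eta(e_i)F'(\xi,\f^2e_j,\f^2e_k)F''(e_q,e_r,e_s)$, I first contract the pair $i,q$: since $\eta(e_i)=g(e_i,\xi)$, the factor $g^{iq}\eta(e_i)$ simply replaces $e_q$ by $\xi$, leaving $g^{jr}g^{ks}F'(\xi,\f^2e_j,\f^2e_k)F''(\xi,e_r,e_s)$. I then transfer each $\f^2$ from $F'$ onto $F''$ across the metric contraction: because $\f^2$ is $g$-self-adjoint, the object $g^{jr}(\f^2)$ contracted in the $j,r$ slots is symmetric, so $\f^2$ may be shifted from one factor to the other, and likewise for the pair $k,s$. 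This gives $g^{jr}g^{ks}F'(\xi,e_j,e_k)F''(\xi,\f^2e_r,\f^2e_s)$, which, after recognizing the first-slot contraction as the replacement $e_i\mapsto\xi$ in $F'$, is exactly $\langle F',p_3(F'')\rangle$. The only subtlety is to keep the two $\f^2$-transfers independent and to invoke self-adjointness in the right (covector) form; I expect no difficulty beyond this indexing, so this step, while the most laborious, is not a real obstacle. As a side remark one should also verify that $p_3$ maps $\W_2^\bot$ into itself, but this is routine: $\eta\circ\f^2=0$ annihilates both $p_1(p_3(F))$ and $p_2(p_3(F))$, while $\f^2\xi=0$ together with the symmetry and the $\f$-relation in \eqref{F-prop} shows $p_3(F)\in\F$.
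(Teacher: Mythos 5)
Your proof is correct and is exactly the direct verification the paper leaves to the reader (the lemma is stated with no proof, in the same "easy to check" spirit as Lemmas~\ref{lem-p1} and~\ref{lem-p2}): idempotency via $\f^4=-\f^2$ and $\eta(\xi)=1$, equivariance via $a^{-1}\xi=\xi$, $\eta\circ a^{-1}=\eta$, $a^{-1}\f^2=\f^2a^{-1}$, and self-adjointness via $\eta=g(\cdot,\xi)$ together with the $g$-self-adjointness of $\f^2$. The added check that $p_3$ is well defined on $\W_2^\bot$ and lands in $\F$ is a welcome completeness touch; no gaps.
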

By virtue of \lemref{lem-p3} we have
\begin{equation}
\begin{array}{l}
\W_3=\im(p_3)=\{F\in\W_2^\bot\;|\; p_3(F)=F\}, \quad 
\W_4=\ker(p_3)=\{F\in\W_2^\bot\;|\; p_3(F)=0\}.
\end{array}
\end{equation}


From \lemref{lem-p1}, \lemref{lem-p2} and \lemref{lem-p3} we have
immediately
\begin{theorem}\label{thm-W1234}
The decomposition $\F=\W_1\oplus\W_2\oplus\W_3\oplus\W_4 $ is
orthogonal and invariant under the action of $\G$. The subspaces
$\W_i$ $(i=1,2,3,4)$ are determined by
\begin{equation}\label{WF}
\begin{array}{l}
\W_1: \; F(x,y,z) = - F(\f^2 x, \f^2 y, \f^2z), \\[4pt]
\W_2: \; F(x,y,z) = \eta (y) F(\f^2 x, \xi, \f^2 z)+\eta (z) F(\f^2 x, \f^2 y, \xi),\\[4pt]
\W_3: \; F(x,y,z) = \eta (x) F(\xi, \f^2 y, \f^2 z),\\[4pt]
\W_4: \; F(x,y,z) = -\eta (x) \left\{\eta(y) F(\xi, \xi,\f^2 z) +
\eta(z) F(\xi, \f^2 y, \xi)\right\}.
\end{array}
\end{equation}
\end{theorem}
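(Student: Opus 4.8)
The plan is to extract the single linear-algebra fact that the three lemmas are engineered to supply and then apply it three times. Suppose $p\colon V\to V$ is an idempotent on an invariant subspace $V\subseteq\F$ that is self-adjoint for $\langle\cdot,\cdot\rangle$ and commutes with $\lm$. Writing $F=p(F)+\bigl(F-p(F)\bigr)$ exhibits $V=\im(p)\oplus\ker(p)$ as an algebraic direct sum, since $p\circ p=p$ forces $\im(p)\cap\ker(p)=0$. This sum is orthogonal: for $u=p(u)\in\im(p)$ and $w\in\ker(p)$, self-adjointness gives $\langle u,w\rangle=\langle p(u),w\rangle=\langle u,p(w)\rangle=0$. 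It is $\G$-invariant: if $u=p(u)$ then $(\lm a)u=p\bigl((\lm a)u\bigr)\in\im(p)$, and if $p(w)=0$ then $p\bigl((\lm a)w\bigr)=(\lm a)p(w)=0$, so both summands are stable under $\lm(\G)$.

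I would then run this through the chain of \lemref{lem-p1}, \lemref{lem-p2} and \lemref{lem-p3}. Taking $V=\F$ and $p=p_1$ gives the orthogonal, invariant splitting $\F=\W_1\oplus\W_1^\bot$. Because $\W_1^\bot$ is invariant, I may apply the fact again with $V=\W_1^\bot$ and $p=p_2$ to obtain $\W_1^\bot=\W_2\oplus\W_2^\bot$, and once more with $V=\W_2^\bot$, $p=p_3$ to obtain $\W_2^\bot=\W_3\oplus\W_4$. Concatenating yields $\F=\W_1\oplus\W_2\oplus\W_3\oplus\W_4$. Mutual orthogonality of all four pieces is automatic, since each split pair sits inside the orthogonal complement of everything split off before it; thus $\W_1\perp\W_2,\W_3,\W_4$, then $\W_2\perp\W_3,\W_4$, then $\W_3\perp\W_4$. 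Invariance of each of the four follows because each lives in an invariant subspace and is itself $\lm(\G)$-stable.

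Finally I would read off the descriptions \eqref{WF}. For the three image spaces this is immediate: $F\in\W_i$ is equivalent to $F=p_i(F)$ $(i=1,2,3)$, which upon substituting \eqref{p1}, \eqref{p2} and the defining formula for $p_3$ is exactly the first three lines of \eqref{WF}. The remaining line, $\W_4=\ker(p_3)$ inside $\W_2^\bot$, needs a short computation, and this is the only place where real work occurs. I would insert $x=-\f^2x+\eta(x)\xi$ into each slot; multilinearity produces eight terms indexed by the choice of $h$-part $(-\f^2\cdot)$ or $v$-part $(\eta(\cdot)\xi)$ per slot. The two terms whose last two slots are both $\xi$ vanish, because \eqref{F-prop} with $y=z=\xi$ forces $F(x,\xi,\xi)=0$. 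The surviving six terms group precisely as $p_1(F)$ (the pure $h$-term), $p_2(F)$ (the two terms with $h$-part first and exactly one $\xi$ among the last two slots), $p_3(F)$ (the term with $\xi$ first and $h$-parts after), and a remainder equal to $-\eta(x)\bigl\{\eta(y)F(\xi,\xi,\f^2z)+\eta(z)F(\xi,\f^2y,\xi)\bigr\}$. For $F\in\W_4$ the first three groups vanish and only the remainder survives, giving the fourth line of \eqref{WF}. The main obstacle is precisely this bookkeeping — keeping track of the signs coming from $h=-\f^2$ and verifying that exactly the two all-$\xi$-tail terms are annihilated by \eqref{F-prop}; the rest of the theorem is the formal consequence of the properties already established in the three lemmas.
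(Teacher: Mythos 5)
Your proposal is correct and follows essentially the same route as the paper, which simply asserts that the theorem follows ``immediately'' from Lemmas~\ref{lem-p1}--\ref{lem-p3}: each lemma supplies an idempotent, self-adjoint, $\G$-equivariant projector, and iterating the image/kernel splitting yields the orthogonal invariant decomposition. Your explicit eight-term expansion of $F(-\f^2x+\eta(x)\xi,\dots)$, with the two terms killed by $F(\cdot,\xi,\xi)=0$, is exactly the unstated computation behind the paper's identity \eqref{p1234} and the fourth line of \eqref{WF}, so you have merely filled in details the paper leaves implicit.
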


Obviously, we have
\begin{equation}\label{p1234}
F=p_1(F)+p_2(F)+p_3(F)+p_4(F).
\end{equation}

\begin{corollary}\label{cor-W1234}
The subspaces $\W_i\ (i = 1, 2, 3, 4)$ are characterized as
follows:
\begin{equation}\label{WFhv}
\begin{array}{l}
\W_1 = \bigl\{F \in \F \;|\; F(v(x),y,z) = F(x, v(y), z) = F(x, y, v(z)) = 0\bigr\}, \\[4pt]
\W_2 = \bigl\{F \in \F \;|\; F(v(x),y,z) = F(x, h(y), h(z)) = 0\bigr\}, \\[4pt]
\W_3 = \bigl\{F \in \F \;|\; F(h(x),y,z) = F(x, v(y), z) = F(x, y, v(z)) = 0\bigr\}, \\[4pt]
\W_4 = \bigl\{F \in \F \;|\; F(h(x),y, z) = F(x, h(y), h(z)) = 0\bigr\}.
\end{array}
\end{equation}
\end{corollary}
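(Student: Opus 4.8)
The plan is to prove \coref{cor-W1234} by showing that each of the four $h$/$v$ conditions in \eqref{WFhv} is equivalent to the corresponding defining relation of $\W_i$ in \eqref{WF}. The key observation is the identity $h(x)=-\f^2 x$ and $v(x)=\eta(x)\xi$, together with the decomposition $x=h(x)+v(x)=-\f^2 x+\eta(x)\xi$, so that a tensor $F$ can be expanded in every argument according to whether it lands in $h(T_pM)$ or $v(T_pM)$. The strategy is to substitute this decomposition into each slot of $F(x,y,z)$ and read off which ``mixed'' pieces survive for a given subspace.

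\medskip

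First I would treat $\W_1$. Expanding $x=-\f^2 x+\eta(x)\xi$ in the first argument gives $F(x,y,z)=-F(\f^2 x,y,z)+\eta(x)F(\xi,y,z)$, and similarly in the second and third arguments. The defining relation $F(x,y,z)=-F(\f^2 x,\f^2 y,\f^2 z)$ says that only the ``triple-$h$'' component is nonzero, which is exactly the statement that every term containing a factor $\eta(\cdot)$ (equivalently, a $v$ in some slot) vanishes; this yields $F(v(x),y,z)=F(x,v(y),z)=F(x,y,v(z))=0$. For the converse, I would use property \eqref{F-prop} to rewrite $F(x,y,z)$ in terms of $F(x,\f y,\f z)$, then substitute $x=h(x)+v(x)$ etc.\ and discard the vanishing $v$-terms to recover $-F(\f^2 x,\f^2 y,\f^2 z)$. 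The other three subspaces proceed analogously: for $\W_2$ I would show the relation forces $F(v(x),y,z)=0$ (no $v$ in the first slot) while $F(h(x),h(y),h(z))=0$ captures that exactly one of $y,z$ must carry the $\xi$-direction; for $\W_3$ the first slot must be purely $v$ and the other two purely $h$; and for $\W_4$ the first slot is purely $v$ while the $h$-$h$ component in the last two slots vanishes.

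\medskip

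The main technical tool throughout is the pair of algebraic identities in \eqref{str}, especially $\f^2=-\Id+\eta\otimes\xi$, $\eta\circ\f=0$, and $\eta(\xi)=1$, which give $h\circ h=h$, $v\circ v=v$, $h\circ v=v\circ h=0$; these ensure that the expansion into $h$- and $v$-components is a genuine direct-sum decomposition in each argument and that iterating the projectors introduces no cross terms. I would also repeatedly use the B-metric compatibility $g(\f x,\f y)=-g(x,y)+\eta(x)\eta(y)$ implicitly through property \eqref{F-prop} to convert between $F(x,\f y,\f z)$ and $F(x,y,z)$ when matching the $\W_1$ characterization.

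\medskip

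The step I expect to be the main obstacle is handling the ``mixed'' conditions for $\W_2$ and $\W_4$, where the defining relations in \eqref{WF} are not simply ``one slot is $v$, the rest are $h$,'' but involve an $\eta(y)$ or $\eta(z)$ coupled to a $\xi$-inserted argument. Here the bookkeeping of which $v$-terms survive is more delicate: for $\W_2$ I must verify that both $F(v(x),y,z)=0$ \emph{and} $F(x,h(y),h(z))=0$ together are equivalent to the single relation $F(x,y,z)=\eta(y)F(\f^2 x,\xi,\f^2 z)+\eta(z)F(\f^2 x,\f^2 y,\xi)$, which requires carefully checking that the symmetry $F(x,y,z)=F(x,z,y)$ from \eqref{F-prop} makes the two surviving cross terms consistent, and that no ``double-$v$'' term $\eta(y)\eta(z)F(\f^2 x,\xi,\xi)$ slips back in. Once this case is done correctly, $\W_4$ follows by the same pattern applied to the complementary kernel condition.
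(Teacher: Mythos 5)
Your proposal is correct and amounts to exactly the routine verification the paper leaves implicit: the corollary is stated without proof as an immediate consequence of \eqref{WF}, and your plan of expanding each argument via $x=h(x)+v(x)$ and matching the surviving terms against the defining relations of the $\W_i$, including the check via \eqref{F-prop} that $F(x,\xi,\xi)=0$ so no double-$v$ term survives, is the intended argument. No gaps; the only superfluous step is invoking $F(x,y,z)=F(x,\f y,\f z)+\cdots$ for the $\W_1$ converse, where plain trilinearity already suffices.
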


According to \eqref{WF}, \eqref{WFhv}  and \eqref{t}
we obtain the following
\begin{corollary}\label{cor-t-Wi}
The Lee forms of $F$ have the following properties in each of the
sub\-spaces $\W_{i}$ $(i=1,2,3,4)$$:$\\[4pt]
\begin{tabular}{rl}
    $($i$)$& If $F\in\W_{1}$, then $\ta\circ v=\ta^*\circ
    v=\om=0;$ \\[4pt]
    $($ii$)$& If $F\in\W_{2}$, then $\ta\circ h=\ta^*\circ
    h=\om=0;$\\[4pt]
    $($iii$)$& If $F\in\W_{3}$, then $\ta=\ta^*=\om=0;$ \\[4pt]
    $($iv$)$& If $F\in\W_{4}$, then $\ta=\ta^*=0$.
\end{tabular}
\end{corollary}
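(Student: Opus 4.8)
The plan is to reduce every assertion to a substitution into the characterizations \eqref{WFhv}, after fixing two elementary facts about the data entering \eqref{t}. First, the frame $\{e_i\}_{i=1}^{2n}$ that completes $\xi$ to a basis is a frame of the contact distribution $H=\ker\eta$, so $\eta(e_i)=0$, equivalently $h(e_i)=e_i$ and $v(e_i)=0$; and since $\eta\circ\f=0$, each $\f e_j$ again lies in $H$, so $h(\f e_j)=\f e_j$. This is the one point that really needs care: the summations defining $\ta$ and $\ta^*$ run over a horizontal frame, which is exactly what lets the projector conditions in \eqref{WFhv} act on the traces. Second, from $v(z)=\eta(z)\xi$ and $h(z)=z-\eta(z)\xi$ one gets, for any $1$-form $\al$, that $(\al\circ v)(z)=\eta(z)\al(\xi)$ and $(\al\circ h)(z)=\al(z)-\eta(z)\al(\xi)$. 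Hence $\ta\circ v$ and $\ta^*\circ v$ vanish iff $\ta(\xi)=g^{ij}F(e_i,e_j,\xi)=0$ and $\ta^*(\xi)=g^{ij}F(e_i,\f e_j,\xi)=0$, while $\ta\circ h$ and $\ta^*\circ h$ are the traces $g^{ij}F(e_i,e_j,h(z))$ and $g^{ij}F(e_i,\f e_j,h(z))$.

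With these reductions the four cases are immediate. For $F\in\W_1$, the condition $F(x,y,v(z))=0$ gives $F(e_i,e_j,\xi)=F(e_i,\f e_j,\xi)=0$, so $\ta\circ v=\ta^*\circ v=0$, and $F(v(x),y,z)=0$ gives $\om(z)=F(\xi,\xi,z)=0$. For $F\in\W_2$, the same vertical condition again yields $\om=0$, while, since $e_i$ and $\f e_j$ are horizontal, the condition $F(x,h(y),h(z))=0$ kills $g^{ij}F(e_i,e_j,h(z))$ and $g^{ij}F(e_i,\f e_j,h(z))$, that is $\ta\circ h=\ta^*\circ h=0$. For $F\in\W_3$, the condition $F(h(x),y,z)=0$ together with $e_i=h(e_i)$ annihilates the whole traces, so $\ta=\ta^*=0$, and $F(x,v(y),z)=0$ gives $\om=0$. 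For $F\in\W_4$, the single condition $F(h(x),y,z)=0$ already forces $\ta=\ta^*=0$ by the same mechanism, and nothing constrains $\om$.

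There is no genuine difficulty in the computation; the substance is entirely in the observation of the first paragraph. If one instead traced \eqref{t} over a full basis of $T_pM$, the $\xi$-direction would feed $\om$ back into $\ta$ — indeed on $\W_4$, whose image is the $\F_{11}$-type tensors, one would then find $\ta=\om$ — and the stated vanishing of $\ta$ on $\W_4$ would break down. So the horizontality of the summation frame, $\eta(e_i)=0$, is precisely the hypothesis that makes the four-line case analysis correct, and it is the step I would state explicitly before checking the cases.
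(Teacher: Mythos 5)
Your proof is correct and follows essentially the same route as the paper, which derives the corollary directly from the characterizations \eqref{WFhv} together with the definitions \eqref{t} of the Lee forms. Your explicit observation that the trace in $\ta$ and $\ta^*$ runs over a frame of $H=\ker(\eta)$ (so that $h(e_i)=e_i$, $h(\f e_j)=\f e_j$) is exactly the convention the paper uses implicitly --- as confirmed by the component formulas \eqref{t3} --- and making it explicit is a legitimate sharpening rather than a deviation.
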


Further we continue the decomposition of the subspaces $\W_i$ $(i = 1, 2, 3, 4)$ of $\F$.

\subsection{The decomposition of $\W_1$}

Let us consider the $2n$-dimensional distribution $H=\ker(\eta)$ of the tangent bundle of $\M$,
the endomorphism $J=\f|_H$ and the metric $h=g|_H$, where $\f|_H$ and $g|_H$ are the restrictions
of $\f$ and $g$ on $H$, respectively. Let us remark that $J$ and $h$ are an almost
complex structure and  a Norden metric on $H$, respectively, \ie
\begin{equation}\label{norden}
J^2=-\Id,\quad h(Jx,Jy)=-h(x,y) .
\end{equation}
Then $(H,J,h)$ can be considered as an almost complex manifold with Norden metric.

Moreover, the subspace $\W_1$ coincides with the restriction of $\F$ on $H$.
%
%
By this reason the decomposition of $\W_1$ is made as
the decomposition,  known from \cite{GaBo}, of the corresponding space of $\F$
for an almost complex manifold with Norden metric. Then we obtain the following
\begin{proposition}\label{prop-F123}
Let $F\in \F$ and $F_{i} \: (i=1,2,3)$ be the projections of $F$
on the subspaces $\F_{i}$, respectively. Then
\begin{equation}\label{F123}
\begin{split}
&F_{1}(x,y,z)=\frac{1}{2n}\bigl\{g(\f x,\f y)\ta(\f^2 z) +g(x,\f
y)\ta(\f z)+g(\f x,\f z)\ta(\f^2 y)+g(x,\f z)\ta(\f y)\bigr\}; \\[4pt]
&F_{2}(x,y,z)=-\frac{1}{4}\bigl\{ %
F(\f^2 x,\f^2 y,\f^2 z)+F(\f^2 y,\f^2 z,\f^2 x)-F(\f y,\f^2 z,\f x)+F(\f^2 x,\f^2 z,\f^2 y)  \\[4pt]
&\phantom{F_{2}(x,y,z)=} %
+F(\f^2 z,\f^2 y,\f^2 x)-F(\f z,\f^2 y,\f
x)\bigr\}-\frac{1}{2n}\bigl\{g(\f x,\f y)\ta(\f^2 z) +g(x,\f
y)\ta(\f z) \\[4pt]
&\phantom{F_{2}(x,y,z)=} +g(\f x,\f z)\ta(\f^2 y)+g(x,\f z)\ta(\f
y)\bigr\};  \\[4pt]
&F_{3}(x,y,z)=-\frac{1}{4}\bigl\{%
F(\f^2 x,\f^2 y,\f^2 z)-F(\f^2 y,\f^2 z,\f^2 x) +F(\f y,\f^2 z,\f
x)+F(\f^2 x,\f^2 z,\f^2 y)\\[4pt]
&\phantom{F_{3}(x,y,z)=}%
-F(\f^2 z,\f^2 y,\f^2 x) +F(\f z,\f^2 y,\f x)\bigr\}.
\end{split}
\end{equation}
\end{proposition}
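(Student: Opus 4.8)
The plan is to reduce the statement to the known decomposition of the Norden case recorded in \cite{GaBo}, transported through the restriction to $H$. First I would observe that on $H=\ker(\eta)$ the $1$-form $\eta$ vanishes, so for horizontal arguments the defining relation \eqref{F-prop} collapses to $F(x,y,z)=F(x,z,y)=F(x,\f y,\f z)$; together with \eqref{norden} for $J=\f|_H$ this is exactly the symmetry of the fundamental $(0,3)$-tensor of the almost complex manifold with Norden metric $(H,J,h)$. Since $\W_1$ is the restriction of $\F$ to $H$ and the structure group reduces from $\G$ to $\mathcal{O}(n;\C)$ on $H$, the module $\W_1$ is isomorphic to the space treated in \cite{GaBo}. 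Hence the orthogonal, $\G$-invariant splitting $\W_1=\F_1\oplus\F_2\oplus\F_3$, with the characterizing conditions for $\F_1,\F_2,\F_3$ as in \eqref{Fi}, is inherited from \cite{GaBo}; it only remains to identify the three orthogonal projections explicitly.

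Next I would write down the candidate projections and recognise them as the lift of the GaBo formulas. On $H$ the projection onto $\F_1$ is the conformal (Lee-form) component $\tfrac{1}{2n}\bigl\{g(x,y)\ta(z)+g(x,z)\ta(y)+g(x,\f y)\ta(\f z)+g(x,\f z)\ta(\f y)\bigr\}$, and I would lift it to $T_pM$ by precomposing every argument with the horizontal projector $-\f^2$. Using $g(\f x,\f y)=-g(x,y)$ and $\ta(\f^2 z)=-\ta(z)$ for horizontal vectors, this lift restricts on $H$ to the GaBo formula and produces exactly the expression for $F_1$ in \eqref{F123}; the factors $\f$, $\f^2$ also guarantee that $F_1$ vanishes whenever an argument is vertical, so $F_1\in\W_1$. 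The complementary tensor $F-F_1$ is traceless, and the GaBo procedure splits it, according to the two cyclic-sum conditions in \eqref{Fi}, into its $\F_2$- and $\F_3$-components; applying the same substitution $x\mapsto-\f^2 x$ turns these into the two six-term brackets displayed for $F_2$ and $F_3$ in \eqref{F123}.

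Then I would confirm the three formulas by the usual checks. For the sum, using only the symmetry $F(x,y,z)=F(x,z,y)$ the two six-term brackets add to $4\,F(\f^2 x,\f^2 y,\f^2 z)$ while the Lee-form correction in $F_2$ cancels $F_1$; hence $F_1+F_2+F_3=-F(\f^2 x,\f^2 y,\f^2 z)=p_1(F)$ by \eqref{p1}, so the formulas recover precisely the $\W_1$-component of $F$. For membership one checks that each $F_i$ satisfies the defining condition of $\F_i$ in \eqref{Fi}; here $\ta_{F_1}=\ta$ (a short trace computation) while $\ta_{F_2}=\ta_{F_3}=0$, and the remaining requirements are the cyclic-sum identities $\sx F_2(x,y,\f z)=0$ and $\sx F_3(x,y,z)=0$. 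Finally, mutual orthogonality of $\F_1,\F_2,\F_3$ is inherited from the orthogonal GaBo splitting, so each $F_i$ is indeed the orthogonal projection of $F$ onto $\F_i$.

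The main obstacle is the membership check for $F_2$ and $F_3$: verifying the two cyclic-sum identities for the six-term expressions requires repeatedly applying $F(x,y,z)=F(x,z,y)$ and the Norden relation $F(x,\f y,z)=-F(x,y,\f z)$ (equivalently $F(x,\f^2 y,\f^2 z)=F(x,y,z)$ on $H$), in order to show that the $F_2$-combination has vanishing $\sx F_2(x,y,\f z)$ and the $F_3$-combination vanishing $\sx F_3(x,y,z)$, and to pin down the coefficients $\tfrac14$ and $\tfrac{1}{2n}$. This bookkeeping is routine but error-prone; everything else follows formally from the identification of $\W_1$ with the Norden space and the results of \cite{GaBo}.
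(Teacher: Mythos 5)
Your proposal is correct and follows essentially the same route as the paper: the paper likewise justifies Proposition~\ref{prop-F123} by identifying $\W_1$ with the space of structure tensors of the almost complex manifold with Norden metric $(H,J,h)$ and importing the decomposition of \cite{GaBo}, stating the lifted formulas without further verification. Your added checks (that the two six-term brackets sum with the Lee-form terms to $p_1(F)=-F(\f^2x,\f^2y,\f^2z)$, and that each $F_i$ satisfies the defining conditions of $\F_i$ in \eqref{Fi}) are consistent and only make the cited argument more explicit.
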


Therefore, the component of $F$ on $\W_1$ is
\begin{equation}\label{p1F}
p_1(F)=F_1+F_2+F_3.
\end{equation}

\subsection{The decomposition of $\W_2$}

Let us consider linear operators $L_{j}:\ \W_2\rightarrow \W_2$
$(j=1,2)$ defined by
\begin{equation}\label{L12}
\begin{split}
&L_{1}(F)(x,y,z)=F(\f x,\f y,\xi)\eta(z)+F(\f x,\f z,\xi)\eta(y),  \\[4pt]
&L_{2}(F)(x,y,z)=F(\f^2 y,\f^2 x,\xi)\eta(z)+F(\f^2 z,\f^2
x,\xi)\eta(y).
\end{split}
\end{equation}

It is easy to check the following
\begin{lemma}\label{lem-L10}
The linear operator $L_{j}$ $(j=1,2)$ is an involutive isometry on
$\W_2$ and it is invariant with respect to the group $\G$, \ie
\[
\begin{array}{c}
L_{j}\circ L_{j}=\Id_{\W_2}, \qquad \langle
L_{j}(F'),L_{j}(F'')\rangle=\langle F',F'' \rangle, \qquad
L_{j}((\lm a)F)=(\lm a)(L_{j}(F)),
\end{array}
\]
where $F',\ F''\in\W_2$ and $\lm a$ is determined by \eqref{la}.
\end{lemma}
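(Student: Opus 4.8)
The plan is to verify the three asserted properties of each operator $L_j$ directly from the definitions in \eqref{L12}, treating $j=1$ and $j=2$ in parallel since their structure is analogous. The guiding principle throughout is that every $F\in\W_2$ satisfies the defining characterization of $\W_2$ from \thmref{thm-W1234}, namely $F(x,y,z)=\eta(y)F(\f^2 x,\xi,\f^2 z)+\eta(z)F(\f^2 x,\f^2 y,\xi)$, together with the structural symmetry $F(x,y,z)=F(x,z,y)$ inherited from \eqref{F-prop}. These will be the workhorses for collapsing composite expressions.

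First I would establish the involutive property $L_j\circ L_j=\Id_{\W_2}$. For $L_1$, applying the operator twice produces terms of the form $F(\f^2 x,\f^2 y,\xi)$ weighted by $\eta(z)$ (plus the symmetric partner in $y,z$); the key simplification is that $\eta\circ\f=0$ from \eqref{str} kills the $\eta$-correction terms, while $\f^2$ iterated reduces correctly because on the contact distribution $\f^2=-\Id+\eta\otimes\xi$ and the surviving arguments carry no $\xi$-component after the first application. One then matches the result against the $\W_2$-defining identity to recover $F(x,y,z)$ exactly. The computation for $L_2$ is similar, using that $\f^2\circ\f^2=-\f^2$ on the relevant arguments together with the $\W_2$ characterization.

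Next I would prove that $L_j$ is an isometry, $\langle L_j(F'),L_j(F'')\rangle=\langle F',F''\rangle$. The cleanest route is to note that for an \emph{involutive} linear operator, being an isometry is equivalent to being self-adjoint, so it suffices to show $\langle L_j(F'),F''\rangle=\langle F',L_j(F'')\rangle$; alternatively, expand the inner product $\langle L_j(F'),L_j(F'')\rangle$ using its definition with a $g$-orthonormal-type basis $\{e_i\}$ and contract. Here the presence of the factor $\eta(z)$ forces one summation index onto the direction $\xi$ via $g^{ks}\eta(e_s)$-type contractions, and the identities $g(\f x,\f y)=-g(x,y)+\eta(x)\eta(y)$ and $\eta(\xi)=1$ let the $\f$'s be moved across the metric contraction at the cost of controlled sign changes that ultimately cancel. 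The invariance under $\G$, namely $L_j((\lm a)F)=(\lm a)(L_j(F))$, follows almost formally: since $\xi$, $\eta$, $g$ and hence $\f^2$ are all fixed (up to the prescribed action) by every $a\in\G$, one substitutes $a^{-1}$ into all arguments on both sides via \eqref{la} and checks that the defining expression for $L_j$ commutes with this substitution; this uses only that $a$ preserves $\eta$ and $\xi$ and commutes with $\f$, which is built into the matrix form of $\G$.

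The main obstacle I anticipate is the bookkeeping in the self-adjointness (equivalently isometry) step rather than any conceptual difficulty. One must carefully track which slots carry the $\eta$-factor and which carry the iterated $\f$, and ensure that when the summation indices are contracted against $g^{iq}g^{jr}g^{ks}$ the $\f$-transfers and the $\eta$-collapses are applied in a consistent order; a single mishandled sign from $g(\f x,\f y)=-g(x,y)+\eta(x)\eta(y)$ will propagate. I would organize this by reducing every expression to the two ``building blocks'' $F(\f^2 x,\xi,\f^2 z)$ and $F(\f^2 x,\f^2 y,\xi)$ guaranteed by the $\W_2$ characterization before contracting, so that the inner-product computation only ever sees these normalized forms; this keeps the symmetry between the two arguments of $\langle\cdot,\cdot\rangle$ manifest and makes the equality $\langle L_j(F'),F''\rangle=\langle F',L_j(F'')\rangle$ fall out by inspection.
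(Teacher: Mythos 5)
Your verification is correct and is exactly the routine check the paper leaves to the reader (the lemma is stated there with only ``It is easy to check'' and no written proof). Your reduction of every $F\in\W_2$ to the building block $F(\f^2\cdot,\f^2\cdot,\xi)$ before computing, the use of $\eta\circ\f=0$, $\f^2\xi=0$, $\f^4=-\f^2$ and $\eta(\xi)=1$ for the involution property, the observation that for an involution the isometry property is equivalent to self-adjointness (where the identity actually doing the work in the contraction is $g(\f x,y)=g(x,\f y)$, a consequence of the B-metric condition, and for $L_1$ no signs in fact appear), and the commutation with $\G$ via $a\xi=\xi$, $\eta\circ a=\eta$, $a\circ\f=\f\circ a$ are all sound.
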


Therefore, $L_{1}$ has two eigenvalues $+1$ and $-1$, and the
corresponding eigen\-spaces
\[
\W_2^+=\left\{F\in \W_2\ \vert\ L_{1}(F)= F\right\},\qquad
\W_2^-=\left\{F\in \W_2\ \vert\ L_{1}(F)=- F\right\}
\]
are invariant orthogonal subspaces of $\W_2$.

In order to decompose $\W_2^+$ and $\W_2^-$, we use the
operator $L_{2}$ on $\W_2^+$ and $\W_2^-$,  respectively.
Let us denote the corresponding eigenspaces $\W_{2,k}$ $(k=1,2,3,4)$ by
\[
\begin{array}{ll}
\W_{2,1}=\left\{\W_2^+\ \vert\ L_{2}(F)=- F\right\},\qquad&
\W_{2,2}=\left\{\W_2^-\ \vert\ L_{2}(F)=- F\right\},\\[4pt]
\W_{2,3}=\left\{\W_2^+\ \vert\ L_{2}(F)= F\right\},\qquad&
\W_{2,4}=\left\{\W_2^-\ \vert\ L_{2}(F)= F\right\}.
\end{array}
\]

Thus we establish the truthfulness of the following
\begin{theorem}\label{thm-T1k}
The decomposition $\W_2=\W_{2,1}\oplus\W_{2,2}\oplus\W_{2,3}\oplus\W_{2,4} $ is
orthogonal and invariant with respect to the structure group.
\end{theorem}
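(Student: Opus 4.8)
The plan is to build \thmref{thm-T1k} out of two successive eigenspace splittings, exactly paralleling the construction already carried out for \thmref{thm-W1234}, and to reduce everything to the properties packaged in \lemref{lem-L10}. First I would observe that $L_1:\W_2\to\W_2$ is an involutive linear isometry commuting with the action $\lm a$ of $\G$ (this is precisely \lemref{lem-L10} with $j=1$). An involution satisfies $L_1\circ L_1=\Id_{\W_2}$, so its only possible eigenvalues are $+1$ and $-1$; the corresponding eigenspaces $\W_2^{+}$ and $\W_2^{-}$ are defined in the excerpt. Since $L_1$ is diagonalizable with these two eigenvalues, we get the direct-sum decomposition $\W_2=\W_2^{+}\oplus\W_2^{-}$ via the projectors $\tfrac12(\Id\pm L_1)$. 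Orthogonality follows from the isometry property: for $F'\in\W_2^{+}$, $F''\in\W_2^{-}$ we have $\langle F',F''\rangle=\langle L_1 F',L_1 F''\rangle=\langle F',-F''\rangle=-\langle F',F''\rangle$, forcing $\langle F',F''\rangle=0$. Invariance of $\W_2^{\pm}$ under $\G$ follows from $L_1\circ(\lm a)=(\lm a)\circ L_1$, since this intertwining sends each eigenspace to itself.

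Next I would repeat the same argument one level down using $L_2$. The essential point is that $L_2$ also commutes with $L_1$, so that $L_2$ preserves each of $\W_2^{+}$ and $\W_2^{-}$ and restricts to an involutive $\G$-equivariant isometry on each. Granting this, applying the identical $\pm1$-eigenspace decomposition of $L_2$ on $\W_2^{+}$ yields $\W_2^{+}=\W_{2,3}\oplus\W_{2,1}$ (the $+1$ and $-1$ eigenspaces, matching the labels in the excerpt), and on $\W_2^{-}$ yields $\W_2^{-}=\W_{2,4}\oplus\W_{2,2}$. Assembling the two layers gives
\[
\W_2=\W_{2,1}\oplus\W_{2,2}\oplus\W_{2,3}\oplus\W_{2,4},
\]
and orthogonality of all four summands follows by combining the orthogonality of $\W_2^{\pm}$ with the orthogonality of the $L_2$-eigenspaces inside each, again via the isometry property of $L_2$. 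Invariance under $\G$ is inherited at each stage from $L_2\circ(\lm a)=(\lm a)\circ L_2$.

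The step I expect to require the most care is verifying that $L_1$ and $L_2$ commute, i.e.\ $L_1\circ L_2=L_2\circ L_1$ on $\W_2$ (equivalently, that $L_2$ maps $\W_2^{+}$ into $\W_2^{+}$ and $\W_2^{-}$ into $\W_2^{-}$). This is the one genuinely new computation, since it is not literally stated in \lemref{lem-L10}; it amounts to substituting the defining formulas \eqref{L12} into one another and using the symmetry and $\f$-compatibility relations \eqref{F-prop} together with the membership condition for $\W_2$ from \eqref{WF}, namely $F(x,y,z)=\eta(y)F(\f^2 x,\xi,\f^2 z)+\eta(z)F(\f^2 x,\f^2 y,\xi)$. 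A careful bookkeeping of the $\eta$-factors and the interplay $\f^3=-\f$, $\eta\circ\f=0$ should collapse the two compositions to the same expression. Everything else—diagonalizability, orthogonality, and $\G$-invariance—is purely formal and follows mechanically from the involution, isometry, and equivariance properties already recorded in \lemref{lem-L10}, so the proof reduces to checking this single commutation identity and then invoking the standard eigenspace-splitting argument twice.
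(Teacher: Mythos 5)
Your proposal is correct and follows essentially the same route as the paper, which likewise obtains the splitting by applying the $\pm1$-eigenspace decomposition of the involutive, $\G$-equivariant isometries $L_1$ and then $L_2$ from \lemref{lem-L10}. The commutation $L_1\circ L_2=L_2\circ L_1$ that you rightly flag as the only nontrivial check does hold: using $\f\xi=0$, $\eta\circ\f=0$, $\eta\circ\f^2=0$ and $\f^3=-\f$, both compositions reduce to $F(\f y,\f x,\xi)\eta(z)+F(\f z,\f x,\xi)\eta(y)$.
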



\begin{proposition}\label{prop-F4-9}
Let $F\in\F$ and $F_{j}$ $(j=4,\dots,9)$ be the projections of $\F$
in the classes $\F_{j}$. Then we have
\begin{subequations}\label{F4-9}
\begin{equation}
\begin{split}
&F_{4}(x,y,z)=-\frac{\ta(\xi)}{2n}\bigl\{g(\f x,\f y)\eta(z)+g(\f x,\f z)\eta(y)\bigr\};  \\[4pt]
& F_{5}(x,y,z)=-\frac{\ta^*(\xi)}{2n}\bigl\{g(x,\f
y)\eta(z)+g(x,\f
z)\eta(y)\bigr\};\\[4pt]
&F_{6}(x,y,z)=\frac{\ta(\xi)}{2n}\bigl\{g(\f x,\f y)\eta(z)+g(\f
x,\f z)\eta(y)\bigr\}+\frac{\ta^*(\xi)}{2n}\bigl\{g(x,\f
y)\eta(z)+g(x,\f
z)\eta(y)\bigr\}\\[4pt]
\end{split}
\end{equation}
\begin{equation}
\begin{split}
&\phantom{F_{6}(x,y,z)=}
+\frac{1}{4}%
   \bigl[F(\f^2 x,\f^2 y,\xi)+F(\f^2 y,\f^2 x,\xi)-F(\f x,\f y,\xi)-F(\f y,\f x,\xi)\bigr]\eta(z)\\[4pt]
&\phantom{F_{6}(x,y,z)=}%
+\frac{1}{4}\bigl[F(\f^2 x,\f^2 z,\xi)+F(\f^2 z,\f^2 x,\xi)-F(\f x,\f z,\xi)-F(\f z,\f x,\xi)\bigr]\eta(y);\\[4pt]
&F_{7}(x,y,z)=\frac{1}{4}
    \bigl[F(\f^2 x,\f^2 y,\xi)-F(\f^2 y,\f^2 x,\xi)-F(\f x,\f y,\xi)+F(\f y,\f x,\xi)\bigr]\eta(z)\\[4pt]
&\phantom{F_{7}(x,y,z)=} %
+\frac{1}{4}
    \bigl[F(\f^2 x,\f^2 z,\xi)-F(\f^2 z,\f^2 x,\xi)-F(\f x,\f z,\xi)+F(\f z,\f x,\xi)\bigr]\eta(y);\\[4pt]
&F_{8}(x,y,z)=\frac{1}{4}
    \bigl[F(\f^2 x,\f^2 y,\xi)+ F(\f^2 y,\f^2 x,\xi)+F(\f x,\f y,\xi)+ F(\f y,\f x,\xi)\bigr]\eta(z)\\[4pt]
&\phantom{F_{8}(x,y,z)=}+\frac{1}{4}
    \bigl[F(\f^2 x,\f^2 z,\xi)+ F(\f^2 z,\f^2 x,\xi)+F(\f x,\f z,\xi)+ F(\f z,\f x,\xi)\bigr]\eta(y);\\[4pt]
&F_{9}(x,y,z)=\frac{1}{4}
    \bigl[F(\f^2 x,\f^2 y,\xi)-F(\f^2 y,\f^2 x,\xi)+F(\f x,\f y,\xi)-F(\f y,\f x,\xi)\bigr]\eta(z)\\[4pt]
&\phantom{F_{9}(x,y,z)=} +\frac{1}{4}
    \bigl[F(\f^2 x,\f^2 z,\xi)-F(\f^2 z,\f^2 x,\xi)+F(\f x,\f z,\xi)-F(\f z,\f x,\xi)\bigr]\eta(y).
\end{split}
\end{equation}
\end{subequations}
\end{proposition}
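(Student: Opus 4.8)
The plan is to recognise that each of the classes $\F_4,\dots,\F_9$ is in fact a subspace of $\W_2$ and that the fourfold splitting $\W_2=\W_{2,1}\oplus\W_{2,2}\oplus\W_{2,3}\oplus\W_{2,4}$ of \thmref{thm-T1k} already separates these six classes, with three of them collected in a single summand. First I would pass from a tensor $F\in\W_2$ to the bilinear form $B(x,y)=F(\f^2 x,\f^2 y,\xi)$ on the horizontal distribution $H=\ker(\eta)$; using \eqref{WF} together with the symmetry of $F$ in its last two arguments from \eqref{F-prop}, one gets $F(x,y,z)=\eta(y)B(x,z)+\eta(z)B(x,y)$, so that $F$ is completely encoded by $B$. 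A direct substitution in \eqref{L12}, using $\f\xi=0$ and $\f^2=-\Id$ on $H$, shows that the involutions $L_1$ and $L_2$ act on $B$ by $B\mapsto B(\f\,\cdot\,,\f\,\cdot\,)$ and by transposition $B^{\top}(x,y)=B(y,x)$, respectively. Hence the $\pm1$-eigenspaces of $L_1$ single out the $\f$-invariant and the $\f$-anti-invariant forms, while those of $L_2$ single out the symmetric and the skew-symmetric forms; comparing these four parities with the defining relations $F(x,y,\xi)=\pm F(y,x,\xi)=\pm F(\f x,\f y,\xi)$ in \eqref{Fi} identifies $\W_{2,1}=\F_9$, $\W_{2,2}=\F_7$, $\W_{2,3}=\F_8$ and $\W_{2,4}=\F_4\oplus\F_5\oplus\F_6$.

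Next I would write the projections as products of the spectral projectors $\tfrac12(\Id\pm L_j)$. Since $L_1$ and $L_2$ commute (both act on $B$ as $B\mapsto B(\f\,\cdot\,,\f\,\cdot\,)^{\top}$ when composed) and are involutive isometries by \lemref{lem-L10}, the orthogonal projector onto each $\W_{2,k}$ is $\tfrac14(\Id\pm L_1)(\Id\pm L_2)$, with the signs dictated by the eigenvalues recorded in the definitions of the $\W_{2,k}$. Expanding each product and reading off the coefficient of $\eta(z)$ (the coefficient of $\eta(y)$ being the same with $y$ and $z$ interchanged) reproduces the formulas for $F_7$, $F_8$ and $F_9$ verbatim; for instance $\tfrac14(\Id+L_1)(\Id-L_2)$, the projector onto $\W_{2,1}=\F_9$, yields exactly the four-term bracket displayed for $F_9$.

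It then remains to split $\W_{2,4}$, the symmetric $\f$-anti-invariant forms $B$, into the two trace pieces $\F_4,\F_5$ and the trace-free piece $\F_6$. The two $\G$-invariant directions here are $g(\f x,\f y)$ and $g(x,\f y)$; computing their Lee forms from \eqref{t} over the $2n$-dimensional part $H$ gives $\ta(\xi)=-2n,\ \ta^*(\xi)=0$ for the first and $\ta(\xi)=0,\ \ta^*(\xi)=-2n$ for the second, and these two forms turn out to be mutually orthogonal. Writing the trace part of $B$ as $a\,g(\f\,\cdot\,,\f\,\cdot\,)+b\,g(\,\cdot\,,\f\,\cdot\,)$ and matching the Lee forms forces $a=-\ta(\xi)/(2n)$ and $b=-\ta^*(\xi)/(2n)$, which is precisely $F_4$ and $F_5$. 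Subtracting them from the $\W_{2,4}$-projection leaves the trace-free remainder $F_6$, whose bracket equals $\tfrac14(\Id-L_1)(\Id+L_2)F$ read off as above, augmented by the two trace corrections $+\tfrac{\ta(\xi)}{2n}\{\cdots\}+\tfrac{\ta^*(\xi)}{2n}\{\cdots\}$ that undo $F_4+F_5$, exactly as displayed.

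The main obstacle I anticipate is bookkeeping rather than conceptual: one must verify that the summands $\F_7,\F_8,\F_9$ genuinely carry no Lee form, so that no stray trace term contaminates their formulas. This reduces to showing that a skew-symmetric $B$, or a symmetric $\f$-invariant $B$, satisfies $\ta(\xi)=\ta^*(\xi)=0$. Realising $B$ as $g(\hat B\,\cdot\,,\cdot\,)$ with $\f$ self-adjoint, the invariance relation $B(\f\,\cdot\,,\f\,\cdot\,)=\pm B$ translates into $\hat B$ anti-commuting (respectively commuting) with $\f$ on $H$, whence $\operatorname{tr}\hat B=\operatorname{tr}(\f\hat B)=0$ follows from the cyclic property of the trace together with $\f^2=-\Id$ on $H$. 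Once these vanishings and the orthogonality $\langle g(\f\,\cdot\,,\f\,\cdot\,),g(\,\cdot\,,\f\,\cdot\,)\rangle=0$ are confirmed, the six displayed expressions are forced; summing them over $j=4,\dots,9$ must return the full $\W_2$-component $p_2(F)$, which furnishes a convenient consistency check.
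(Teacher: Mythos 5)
Your argument is correct and is essentially the paper's own proof: the paper likewise forms the spectral projectors $\tfrac14(\Id\pm L_{1})(\Id\pm L_{2})$ of the commuting involutions from \lemref{lem-L10}, matches the four eigenspaces against the defining relations in \eqref{Fi} to obtain $F_7$, $F_8$, $F_9$, and splits the remaining symmetric $\f$-anti-invariant summand into $\F_4\oplus\F_5\oplus\F_6$ by means of the Lee forms $\ta(\xi)$, $\ta^*(\xi)$. Your reformulation via the bilinear form $B(x,y)=F(\f^2x,\f^2y,\xi)$ and your trace computations (for the constants $-\ta(\xi)/(2n)$, $-\ta^*(\xi)/(2n)$ and for the vanishing of the Lee forms on the other three eigenspaces) merely supply details the paper leaves implicit; your labelling of the $\W_{2,k}$ (placing $\F_9$ in $\W_{2,1}$) differs from the paper's assignment, but this traces back to a sign slip in the paper's own indexing of the eigenspaces versus its explicit projector formulas and does not affect the displayed expressions.
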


\begin{proof}
\lemref{lem-L10} implies that the tensor
$\frac{1}{2}\left\{F+L_{1}(F)\right\}$ is the projection of
$F\in\W_2$ in $\W_2^+=\W_{2,1}\oplus\W_{2,3}$ and moreover
$\frac{1}{2}\left\{F-L_{2}(F)\right\}$ is the projection of
$F\in\W_2^+$ in $\W_{2,1}$. %
Thus, we find the expression of the projection $p_{2,1}$ of $F$
from $\W_2$ to $\W_{2,1}$ in terms of $L_{1}$ and $L_{2}$, namely
%
\[
p_{2,1}(F)=\frac{1}{4}\left\{F-L_{1}(F)+L_{2}(F)-L_{2}\circ
L_{1}(F)\right\}.
\]
In a similar way we treat with the projections $p_{2,k}(F)$ in $\W_{2,k}$ $(k=2,3,4)$.
After that, using \eqref{L12}, we get the following expressions
\begin{equation}\label{p2k}
\begin{split}
&p_{2,1}(F)(x,y,z)=\frac{1}{4}
                    \bigl[F(\f^2 x,\f^2 y,\xi)+ F(\f^2 y,\f^2 x,\xi)
-F(\f x,\f y,\xi)- F(\f y,\f x,\xi)\bigr]\eta(z)\\[4pt]
&\phantom{p_{2,1}(F)(x,y,z)=}+\frac{1}{4}
                    \bigl[F(\f^2 x,\f^2 z,\xi)+ F(\f^2 z,\f^2 x,\xi)
-F(\f x,\f z,\xi)- F(\f z,\f x,\xi)\bigr]\eta(y);\\[4pt]
&p_{2,2}(F)(x,y,z)=\frac{1}{4}
                    \bigl[F(\f^2 x,\f^2 y,\xi)- F(\f^2 y,\f^2 x,\xi)
-F(\f x,\f y,\xi)+ F(\f y,\f x,\xi)\bigr]\eta(z)\\[4pt]
&\phantom{p_{2,2}(F)(x,y,z)=}+\frac{1}{4}
                    \bigl[F(\f^2 x,\f^2 z,\xi)- F(\f^2 z,\f^2 x,\xi)
-F(\f x,\f z,\xi)+ F(\f z,\f x,\xi)\bigr]\eta(y);\\[4pt]
&p_{2,3}(F)(x,y,z)=\frac{1}{4}
                    \bigl[F(\f^2 x,\f^2 y,\xi)+ F(\f^2 y,\f^2 x,\xi)
+F(\f x,\f y,\xi)+ F(\f y,\f
x,\xi)\bigr]\eta(z)\\[4pt]
&\phantom{p_{2,3}(F)(x,y,z)=}+\frac{1}{4}
                    \bigl[F(\f^2 x,\f^2 z,\xi)+ F(\f^2 z,\f^2 x,\xi)
+F(\f x,\f z,\xi)+ F(\f z,\f
x,\xi)\bigr]\eta(y);\\[4pt]
&p_{2,4}(F)(x,y,z)=\frac{1}{4}
                    \bigl[F(\f^2 x,\f^2 y,\xi)- F(\f^2 y,\f^2 x,\xi)
+F(\f x,\f y,\xi)- F(\f y,\f x,\xi)\bigr]\eta(z)\\[4pt]
&\phantom{p_{2,4}(F)(x,y,z)=}+\frac{1}{4}
                    \bigl[F(\f^2 x,\f^2 z,\xi)- F(\f^2 z,\f^2 x,\xi)
+F(\f x,\f z,\xi)- F(\f z,\f x,\xi)\bigr]\eta(y).
\end{split}
\end{equation}

By virtue of \coref{cor-t-Wi}, \eqref{t}  and \eqref{p2k}, we
establish that the Lee forms $\ta$ and $\ta^*$ of $F$ are zero in
$\W_{2,k}$ $(k=2,3,4)$. We have no additional conditions for $\ta$
and $\ta^*$ in $\W_{2,1}$. Then, $\W_{2,1}$ can be additionally
decomposed to three subspaces determined by the conditions
$\ta=0$, $\ta^*=0$ and $\ta=\ta^*=0$, respectively, \ie
\begin{equation*}\label{W21}
\W_{2,1}=\W_{2,1,1}\oplus \W_{2,1,2}\oplus \W_{2,1,3},
\end{equation*}
where
\begin{equation*}\label{W21s}
\begin{array}{c}
\W_{2,1,1}=\left\{F\in \W_{2,1}\ \vert\
\ta^*=0\right\},\qquad
\W_{2,1,2}=\left\{F\in \W_{2,1}\ \vert\ \ta=0\right\}, \\[4pt]
\W_{2,1,3}=\left\{F\in \W_{2,1}\ \vert\ \ta=0,\ \ta^*=0\right\}.
\end{array}
\end{equation*}

According to \eqref{Fi}, the classes $\F_4$ and $\F_5$ are defined
by explicit expressions of $F$, which have the form of $p_{2,1}(F)$ in \eqref{p2k}.
Hence we conclude that $\F_4$ and $\F_5$ are the subspaces $\W_{2,1,1}$ and $\W_{2,1,2}$
of $\W_{2,1}$, respectively, and
the projections of $F$ on them 
have the form given in the first line of  \eqref{F4-9}. Therefore,
the equality for $F_6$ in \eqref{F4-9} follows from the form of
$p_{2,1}(F)$ in \eqref{p2k} and the fact that $\F_6$ coincides
with $\W_{2,1,3}$.

The form of $p_{2,2}(F)$, $p_{2,3}(F)$ and $p_{2,4}(F)$ in \eqref{p2k} satisfies
the conditions in \eqref{Fi} for the subspace $\F_7$, $\F_8$ and $\F_9$, respectively.

Thus, the subspaces $\W_{2,1,1}$, $\W_{2,1,2}$, $\W_{2,1,3}$, $\W_{2,2}$, $\W_{2,3}$, $\W_{2,4}$
correspond to
the classes
$\F_4$, $\F_5$, $\F_6$, $\F_7$, $\F_8$, $\F_9$,
respectively.
\end{proof}

Therefore, the component of $F$ on $\W_2$ is
\begin{equation}\label{p2F}
p_2(F)=F_4+F_5+F_6+F_7+F_8+F_9.
\end{equation}

\subsection{The decomposition of $\W_3$ and $\W_4$}

Finally, since $\F_{10}$ and $\F_{11}$ are determined in
\eqref{Fi} by an expression of $F$ which coincide with the
conditions in \eqref{WF} for $\W_3$ and $\W_4$, respectively, we
have the following
\begin{proposition}\label{prop-F1011}
Let $F\in\F$ and $F_{l}$ $(l=10,11)$ be the projections of $\F$ in
the subspaces $\F_{l}$. Then we have
\begin{equation}\label{F1011}
\begin{array}{l}
F_{10}(x,y,z)=\eta (x) F(\xi, \f^2 y, \f^2 z);\\[4pt]
F_{11}(x,y,z)= -\eta (x) \left\{\eta(y) F(\xi, \xi,\f^2 z) + \eta(z)
F(\xi, \f^2 y, \xi)\right\}.
\end{array}
\end{equation}
\end{proposition}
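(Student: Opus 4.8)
The plan is to exploit the observation recorded immediately before the statement: the defining expressions of $\F_{10}$ and $\F_{11}$ in \eqref{Fi} are, up to the relation $\f^2=-\Id+\eta\otimes\xi$, exactly the characterizing identities of $\W_3$ and $\W_4$ in \eqref{WF}. Accordingly I would first prove the two subspace identifications $\F_{10}=\W_3$ and $\F_{11}=\W_4$, and then read off the projections: the projection onto $\W_3$ is the operator $p_3$, whose action $p_3(F)(x,y,z)=\eta(x)F(\xi,\f^2 y,\f^2 z)$ is verbatim the asserted formula for $F_{10}$, while the projection onto $\W_4$ is $p_4=\Id-p_1-p_2-p_3$ from \eqref{p1234}, whose image is cut out by the $\W_4$-line of \eqref{WF} and yields the formula for $F_{11}$.

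For $\F_{10}=\W_3$ I would proceed as follows. If $F\in\W_3$, then putting $x=\xi$ in the $\W_3$-identity gives $F(\xi,y,z)=F(\xi,\f^2 y,\f^2 z)$; specialising $y=\xi$ or $z=\xi$ and using $\f^2\xi=0$ forces $F(\xi,\xi,z)=F(\xi,y,\xi)=0$, so $\om=0$. Substituting these vanishings into \eqref{F-prop} for $F(\xi,y,z)$ produces $F(\xi,\f^2 y,\f^2 z)=F(\xi,y,z)=F(\xi,\f y,\f z)$, which turns the $\W_3$-identity into the $\F_{10}$-identity. The converse is the same computation read backwards, the point being that the $\F_{10}$-condition itself forces $\om=0$ through \eqref{F-prop}.

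The identification $\F_{11}=\W_4$ is purely algebraic: substituting $\f^2 z=-z+\eta(z)\xi$ together with the always-valid identity $\om(\xi)=0$ gives $F(\xi,\xi,\f^2 z)=-\om(z)$, and by the symmetry $F(x,y,z)=F(x,z,y)$ also $F(\xi,\f^2 y,\xi)=-\om(y)$. The $\W_4$-line of \eqref{WF} then collapses to $\eta(x)\{\eta(y)\om(z)+\eta(z)\om(y)\}$, which is precisely the defining expression of $\F_{11}$.

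The single point needing genuine care---and where I expect the main obstacle---is justifying that the right-hand sides of \eqref{F1011} may be written with the \emph{original} tensor $F$ rather than with its components $p_3(F)$, $p_4(F)$. This is legitimate because each sampling detects only the relevant summand. Indeed, by \coref{cor-W1234} the expression $F(\xi,\f^2 y,\f^2 z)=F(\xi,h(y),h(z))$ vanishes on $\W_1$, $\W_2$ and $\W_4$, so it extracts exactly the $\W_3$-part of $F$; and by \coref{cor-t-Wi} the Lee form $\om$---equivalently the values $F(\xi,\xi,\cdot)$ and $F(\xi,\cdot,\xi)$---vanishes on $\W_1$, $\W_2$ and $\W_3$, so it extracts exactly the $\W_4$-part. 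Granting these two remarks, the identifications of the preceding paragraphs deliver \eqref{F1011}.
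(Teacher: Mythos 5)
Your proposal is correct and follows exactly the route the paper takes: the paper's entire proof is the observation that the defining expressions of $\F_{10}$ and $\F_{11}$ in \eqref{Fi} coincide with the characterizing conditions of $\W_3$ and $\W_4$ in \eqref{WF}, so that $F_{10}=p_3(F)$ and $F_{11}=p_4(F)$. You merely supply the (correct) computational details the paper leaves implicit, including the worthwhile remark that the right-hand sides of \eqref{F1011} may be evaluated on the full tensor $F$ because the samplings $F(\xi,\f^2 y,\f^2 z)$ and $\om$ annihilate the other three summands.
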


Therefore, the components of $F$ on $\W_3$ and $\W_4$ are
\begin{equation}\label{p34F}
p_3(F)=F_{10},\qquad p_4(F)=F_{11},
\end{equation}
respectively. Then, bearing in mind \eqref{p1F}, \eqref{p2F} and
\eqref{p34F}, we obtain that
\[
F(x,y,z)=\sum_{i=1}^{11}F_i(x,y,z),
 \]
where the components $F_i$ of $F$ in the corresponding subspaces
$\F_i$ $(i=1,\dots,11)$ of $\F$ are determined in \propref{prop-F123},
\propref{prop-F4-9} and \propref{prop-F1011}.

In conclusion we give
\begin{theorem}\label{thm-Fi}
The almost contact B-metric manifold $\M$ belongs to the basic
class $\F_i$ $(i=1,\dots,11)$ if and only if the structure tensor
$F$ satisfies the condition $F=F_i$, where the components $F_i$ of
$F$ are given in \eqref{F123}, \eqref{F4-9} and \eqref{F1011}.
\end{theorem}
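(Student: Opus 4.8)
The plan is to treat this statement not as a new computation but as the formal harvest of the orthogonal, $\G$-invariant direct-sum decomposition assembled in this section. Concatenating \thmref{thm-W1234}, the refinement of $\W_2$ from \thmref{thm-T1k}, and the identifications of each summand with a class $\F_i$ carried out in \propref{prop-F123}, \propref{prop-F4-9} and \propref{prop-F1011}, one has the orthogonal decomposition $\F=\F_1\oplus\F_2\oplus\dots\oplus\F_{11}$. By the very way the tensors $F_i$ of \eqref{F123}, \eqref{F4-9} and \eqref{F1011} were produced, each $F_i$ is the orthogonal projection of $F$ onto the summand $\F_i$; hence $F=\sum_{i=1}^{11}F_i$ with $F_i\in\F_i$, as already recorded in \eqref{p1F}, \eqref{p2F} and \eqref{p34F}. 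With this in hand the equivalence is purely structural.

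For the implication $F=F_i\Rightarrow\M\in\F_i$ I would argue directly from the projection formulas: each $F_i$ satisfies, by construction, the characteristic condition in \eqref{Fi} that defines the $i$-th class, so $F_i\in\F_i$. Consequently, if the whole structure tensor coincides with its $i$-th component, $F=F_i$, then $F$ itself obeys that defining condition, which is exactly what it means for the manifold $\M$ to belong to $\F_i$.

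For the converse $\M\in\F_i\Rightarrow F=F_i$ the orthogonality of the decomposition does the work. If $\M$ lies in $\F_i$, then $F$ satisfies the condition of \eqref{Fi}, \ie $F$ is an element of the subspace $\F_i\subset\F$. Since $\F=\bigoplus_{j=1}^{11}\F_j$ is a direct sum, the expansion $F=\sum_j F_j$ into components $F_j\in\F_j$ is unique; an element that already lies in the single summand $\F_i$ therefore has all remaining components equal to zero, so $F_j=0$ for $j\neq i$ and $F=F_i$.

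The one point that genuinely needs care — and which is already settled in the three propositions — is that the explicit conditions listed in \eqref{Fi} carve out precisely the summands $\F_i$ of the decomposition, so that the assertions ``$F$ satisfies the characteristic condition of $\F_i$'' and ``$F$ belongs to the invariant subspace $\F_i$'' coincide. Once that matching is granted, the theorem is a formal consequence of the uniqueness of the orthogonal projection, and I expect no further obstacle beyond the identifications made in \propref{prop-F123}, \propref{prop-F4-9} and \propref{prop-F1011}.
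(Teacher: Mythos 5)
Your proposal is correct and follows exactly the route the paper intends: the theorem is presented there as an immediate consequence of the orthogonal decomposition $\F=\bigoplus_{i=1}^{11}\F_i$ and the identifications in Propositions~\ref{prop-F123}, \ref{prop-F4-9} and \ref{prop-F1011}, with both implications resting on the uniqueness of the direct-sum expansion $F=\sum_i F_i$. You have merely written out explicitly the formal argument the paper leaves implicit, so there is nothing to add or correct.
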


It is easy to conclude that an almost contact B-metric manifold
belongs to a direct sum of two or more basic classes, \ie
$\M\in\F_i\oplus\F_j\oplus\cdots$, if and only if the structure
tensor $F$ on $\M$ is the sum of the corresponding components
$F_i$, $F_j$, $\ldots$ of $F$, \ie the following condition is
satisfied $F=F_i+F_j+\cdots$.

\section{The components of the structure tensor for dimension 3}\label{sec-com3}

In this section we are interesting in the lowest dimension of the manifolds
under consideration, \ie we consider the case of $\dim{M}=3$ for $\M$.

Let us denote the components $F_{ijk}=F(e_i,e_j,e_k)$ of the structure tensor
$F$ with respect to a $\f$-basis $\left\{e_i\right\}_{i=0}^2=\left\{e_0=\xi,e_1=e,e_2=\f e\right\}$,
which satisfies the following conditions
\begin{equation}\label{gij}
g(e_0,e_0)=g(e_1,e_1)=-g(e_2,e_2)=1,\quad g(e_i,e_j)=0,\; i\neq j \in \{0,1,2\}.
\end{equation}

Then, using \eqref{t} and \eqref{gij}, we obtain the components of the Lee forms
with respect to the basis $\left\{e_i\right\}_{i=0}^2$ as follows
\begin{equation}\label{t3}
\begin{array}{lll}
\ta_0=F_{110}-F_{220},\qquad & \ta_1=F_{111}-F_{221},\qquad &\ta_2=F_{112}-F_{211},\\[4pt]
\ta^*_0=F_{120}+F_{210},\qquad &\ta^*_1=F_{112}+F_{211},\qquad &\ta^*_2=F_{111}+F_{221},\\[4pt]
\om_0=0,\qquad &\om_1=F_{001},\qquad &\om_2=F_{002}.
\end{array}
\end{equation}


Let us consider arbitrary vectors $x,y,z\in T_pM$. Therefore we have
$x=x^ie_i$, $y=y^ie_i$, $z=z^ie_i$ with respect to $\left\{e_i\right\}_{i=0}^2$.

By direct computations we obtain
\begin{proposition}\label{prop-Fi}
The components $F_i$ $(i=1,2,\dots,11)$ of the structure tensor
$F$ in the corresponding basic classes $\F_i$ are the following
\begin{equation}\label{Fi3}
\begin{array}{l}
F_{1}(x,y,z)=\left(x^1\ta_1-x^2\ta_2\right)\left(y^1z^1+y^2z^2\right),
\qquad \ta_1=F_{111}=F_{122},\quad \ta_2=-F_{211}=-F_{222}; \\[4pt]
F_{2}(x,y,z)=F_{3}(x,y,z)=0;
\\[4pt]
F_{4}(x,y,z)=\frac{1}{2}\ta_0\Bigl\{x^1\left(y^0z^1+y^1z^0\right)
-x^2\left(y^0z^2+y^2z^0\right)\bigr\},
%
\quad \frac{1}{2}\ta_0=F_{101}=F_{110}=-F_{202}=-F_{220};\\[4pt]
F_{5}(x,y,z)=\frac{1}{2}\ta^*_0\bigl\{x^1\left(y^0z^2+y^2z^0\right)
+x^2\left(y^0z^1+y^1z^0\right)\bigr\},
%
\quad \frac{1}{2}\ta^*_0=F_{102}=F_{120}=F_{201}=F_{210};\\[4pt]
F_{6}(x,y,z)=F_{7}(x,y,z)=0;\\[4pt]
F_{8}(x,y,z)=\lm\bigl\{x^1\left(y^0z^1+y^1z^0\right)
+x^2\left(y^0z^2+y^2z^0\right)\bigr\},
\quad \lm=F_{101}=F_{110}=F_{202}=F_{220};\\[4pt]
F_{9}(x,y,z)=\mu\bigl\{x^1\left(y^0z^2+y^2z^0\right)
-x^2\left(y^0z^1+y^1z^0\right)\bigr\},
\quad \mu=F_{102}=F_{120}=-F_{201}=-F_{210};\\[4pt]
F_{10}(x,y,z)=\nu x^0\left(y^1z^1+y^2z^2\right),\quad
\nu=F_{011}=F_{022};\\[4pt]
F_{11}(x,y,z)=x^0\bigl\{\left(y^1z^0+y^0z^1\right)\om_{1}
+\left(y^2z^0+y^0z^2\right)\om_{2}\bigr\},
\quad \om_1=F_{010}=F_{001},\quad \om_2=F_{020}=F_{002}.
\end{array}
\end{equation}
\end{proposition}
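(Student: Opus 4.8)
The plan is to evaluate the general projection formulas of \propref{prop-F123}, \propref{prop-F4-9} and \propref{prop-F1011} on the fixed $\f$-basis and read off the coefficients, so the first task is to record the structure data. From \eqref{str} and \eqref{gij} one has $\f e_0=0$, $\f e_1=e_2$, $\f e_2=-e_1$, hence $\f^2 e_0=0$, $\f^2 e_1=-e_1$, $\f^2 e_2=-e_2$, together with $\eta(e_0)=1$ and $\eta(e_1)=\eta(e_2)=0$. I would then cut down the components $F_{ijk}=F(e_i,e_j,e_k)$ using \eqref{F-prop}. Symmetry in the last two arguments gives $F_{ijk}=F_{ikj}$. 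Applying $F(x,y,z)=F(x,\f y,\f z)+\eta(y)F(x,\xi,z)+\eta(z)F(x,y,\xi)$ with $y,z\in H$ yields $F_{i11}=F_{i22}$ and $F_{i12}=F_{i21}=0$, the latter because $F(e_i,e_1,e_2)=F(e_i,\f e_1,\f e_2)=-F(e_i,e_2,e_1)=-F(e_i,e_1,e_2)$; the choice $y=z=\xi$ gives $F_{i00}=0$. Thus for each fixed first index only the three numbers $F_{i11}\,(=F_{i22})$, $F_{i01}\,(=F_{i10})$, $F_{i02}\,(=F_{i20})$ survive, so $F$ is determined by nine real components, and \eqref{t3} rewrites the Lee forms in terms of them.

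Next, for each class I would substitute $x=x^ie_i$, $y=y^je_j$, $z=z^ke_k$ into the appropriate projection formula and expand by multilinearity. The four blocks described by \eqref{WF} and \eqref{WFhv} localise the work: $\W_1$ involves only indices in $\{1,2\}$ (hence $F_1$ from \eqref{F123}), $\W_2$ involves one index $0$ among the last two slots with the first slot in $\{1,2\}$ (hence $F_4,\dots,F_9$ from \eqref{F4-9}), $\W_3$ has first index $0$ and the last two in $\{1,2\}$ (giving $F_{10}$), and $\W_4$ has first index $0$ and one further index $0$ (giving $F_{11}$). Matching the resulting expressions with the reduced components and with \eqref{t3} identifies the displayed coefficients $\tfrac12\ta_0$, $\tfrac12\ta^*_0$, $\lm$, $\mu$, $\nu$, $\om_1$, $\om_2$ of \eqref{Fi3}; this step is routine bookkeeping once the reductions above are in place.

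The one point that needs a genuine argument is the vanishing $F_2=F_3=F_6=F_7=0$, which is the dimensional collapse announced in the introduction, and I expect this to be the main obstacle. For $\W_1$ the surviving components are exactly the two independent numbers $F_{111}=F_{122}$ and $F_{211}=F_{222}$, so $\dim\W_1=2$; since the explicit $F_1$ in \eqref{Fi3} is parametrised by the two independent quantities $\ta_1,\ta_2$ and lies in $\W_1$, it already exhausts $\W_1$, forcing the remaining summands in $p_1(F)=F_1+F_2+F_3$ to vanish, i.e. $F_2=F_3=0$. Likewise the surviving $\W_2$-components are the four numbers $F_{101}$, $F_{202}$, $F_{102}$, $F_{201}$, so $\dim\W_2=4$; the four one-dimensional pieces $\F_4$, $\F_5$, $\F_8$, $\F_9$ (parametrised respectively by $\ta_0$, $\ta^*_0$, $\lm$, $\mu$) are contained in $\W_2$ and linearly independent, so they span $\W_2$ and the components $F_6,F_7$ of $p_2(F)=F_4+F_5+F_6+F_7+F_8+F_9$ are zero. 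Confirming that the listed classes really do span the respective subspaces — the dimension count $\dim\W_1=2$ and $\dim\W_2=4$ matched against the parameters in \eqref{Fi3} — is the crux; the rest is direct substitution.
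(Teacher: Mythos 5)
Your proposal is correct and follows essentially the same route as the paper, whose proof is just a direct evaluation of the projection formulas \eqref{F123}, \eqref{F4-9}, \eqref{F1011} on the $\f$-basis after the components $F_{ijk}$ have been reduced via \eqref{F-prop} and \eqref{gij}. Your dimension-count shortcut for $F_2=F_3=F_6=F_7=0$ (matching $\dim\W_1=2$ and $\dim\W_2=4$ against the parameters of $F_1$ and of $F_4$, $F_5$, $F_8$, $F_9$) is a clean substitute for directly checking that those projections vanish, but it is not a genuinely different method.
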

\begin{proof}
Using \thmref{thm-Fi} and the expressions of the components $F_i$ of $F$ for
the corresponding classes $\F_i$ $(i=1,\dots,11)$, determined by
\eqref{F123}, \eqref{F4-9} and \eqref{F1011}, the equalities \eqref{gij},
\eqref{t3} and the properties \eqref{F-prop} of $F$, we obtain the
corresponding explicit expression of $F_i$  for dimension 3.
\end{proof}

According to \thmref{thm-Fi} and \propref{prop-Fi}, we obtain
\begin{theorem}\label{thm-3D}
The class of almost contact B-metric manifolds of dimension 3
is
\[
\F_1 \oplus \F_4 \oplus \F_5 \oplus \F_8 \oplus \F_9 \oplus
\F_{10} \oplus \F_{11},
\]
\ie the basic classes $\F_2$, $\F_3$,
$\F_6$, $\F_7$ are restricted to the special class $\F_0$.
\end{theorem}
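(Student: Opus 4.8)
The plan is to reduce everything to the explicit component formulas of \propref{prop-Fi}. By \thmref{thm-Fi} together with the remark following it, a manifold $\M$ lies in a direct sum $\F_{i}\oplus\F_{j}\oplus\cdots$ precisely when its structure tensor satisfies $F=F_{i}+F_{j}+\cdots$, while in full generality one always has the decomposition $F=\sum_{i=1}^{11}F_i$ into the components determined in \eqref{F123}, \eqref{F4-9} and \eqref{F1011}. Hence the whole theorem will follow once I show that, in dimension $3$, the four components $F_2$, $F_3$, $F_6$, $F_7$ vanish identically for every $F\in\F$.

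This vanishing is exactly what the dimension-$3$ computation records: inspecting the formulas in \eqref{Fi3} one reads off $F_2=F_3=0$ and $F_6=F_7=0$. Therefore, for any structure tensor on a $3$-dimensional almost contact B-metric manifold the general decomposition collapses to
\[
F=F_1+F_4+F_5+F_8+F_9+F_{10}+F_{11},
\]
so that every such $F$ lies in $\F_1\oplus\F_4\oplus\F_5\oplus\F_8\oplus\F_9\oplus\F_{10}\oplus\F_{11}$. This is the first assertion: the four excluded basic classes never contribute a component, and the displayed direct sum is the sharpest basic-class description of the whole family in dimension $3$.

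For the second assertion I would argue on each of the four excluded classes separately. If $\M\in\F_j$ with $j\in\{2,3,6,7\}$, then by \thmref{thm-Fi} its structure tensor equals the single component, $F=F_j$; but $F_j=0$ in dimension $3$ by the previous step, whence $F(x,y,z)=0$ for all $x,y,z\in T_pM$. This is precisely the defining condition of the special class $\F_0$, so each of $\F_2$, $\F_3$, $\F_6$, $\F_7$ degenerates to $\F_0$ when $\dim M=3$.

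The substantive work is thus already carried out in \propref{prop-Fi}, and the main obstacle lies there rather than in the present deduction: one must verify, using the $\f$-basis \eqref{gij}, the identities \eqref{t3} for the Lee forms, and the symmetries \eqref{F-prop} of $F$, that the $\F_2$-, $\F_3$-, $\F_6$-, $\F_7$-components of an arbitrary $F$ are forced to zero. Conceptually this is a consequence of the low dimension: the horizontal block $\W_1$ is the restriction of $F$ to the $2n$-dimensional Norden distribution $(H,J,h)$, which here has real dimension $2n=2$, a dimension in which the Ganchev--Borisov classes matching $\F_2$ and $\F_3$ are empty; an analogous dimensional restriction on the $\W_2$-block, reflected through the trace conditions $\ta=\ta^*=0$ appearing in the definitions of $\F_6$ and $\F_7$, annihilates those two components as well.
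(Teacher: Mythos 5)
Your argument is correct and is essentially the paper's own: the theorem is deduced directly from \thmref{thm-Fi} together with the dimension-3 component formulas of \propref{prop-Fi}, where $F_2=F_3=F_6=F_7=0$ is read off from \eqref{Fi3}. You simply spell out the deduction (and add a plausible conceptual gloss) that the paper compresses into a one-line citation of those two results.
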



\section{Some examples}\label{sec-exm}

\subsection{Time-like sphere as a manifold from the class
$\F_4\oplus\F_{5}$}

In \cite{GaMiGr}, an example of an almost contact manifold with
B-metric is given. It is constructed as a time-like sphere of
$\R^{2n+2}$ with complex structure and Norden metric. Namely, let
$\R^{2n+2}=\Bigl\{\left(u^1,\dots,u^{n+1};v^1,\dots,v^{n+1}\right)\
\big|\ \allowbreak{}u^i,v^i\in\R\Bigr\}$ be considered as a
complex Riemannian manifold with the canonical complex structure
$J$ and the metric $g$ defined by
\[
J\frac{\partial}{\partial u^i}=\frac{\partial}{\partial v^i},\quad
J\frac{\partial}{\partial v^i}=-\frac{\partial}{\partial u^i},
\quad g(x,x)=-\delta_{ij}\lm^i\lm^j+\delta_{ij}\mu^i\mu^j,
\]
where $x=\lm\frac{\partial}{\partial
u^i}+\mu\frac{\partial}{\partial v^i}$. Identifying the point $p$
in $\R^{2n+2}$ with its position vector $Z$, we define the unit
time-like sphere
\[
S^{2n+1}:\; g(Z,Z)=-1.
\]
The almost contact structure is determined by
\[
\xi=\sin t \cdot Z+\cos t\cdot JZ,\quad
 Jx=\f
x+\eta(x)J\xi,
\]
where
$t=\arctan\bigl(g(z,Jz)\bigr)\in\left(-\frac{\pi}{2};\frac{\pi}{2}\right)$
and $x$, $\f x$ $\in T_pS^{2n+1}$. The metric on the hypersurface
is the restriction of $g$ and it is denote by the same letter.
Then $\left(S^{2n+1},\f,\xi,\eta,g\right)$ is an almost contact
B-metric manifold. It belongs to the class $\F_4\oplus\F_5$
because the structure tensor has the following form
\[
F(x,y,z)=-\cos t\ \{g(\f x,\f y)\eta(z)+g(\f x,\f z)\eta(y)\}
-\sin t\ \{g(x,\f y)\eta(z)+g(x,\f z)\eta(y)\},
\]
where $\cos t=\frac{\ta(\xi)}{2n}$, $\sin t=\frac{\ta^*(\xi)}{2n}$
and then we obtain the following expression of $F$, bearing in
mind \eqref{F4-9}:
\[
F=F_4+F_5.
\]

If we consider the 3-dimensional unit time-like sphere
$(S^3,\f,\xi,\eta,g)$ then we have the following form of the
structure tensor, using \eqref{Fi3}:
\[
F(x,y,z)=\frac{1}{2}\Bigl\{\left(\ta_0x^1+\ta^*_0x^2\right)\left(y^0z^1+y^1z^0\right)
+\left(\ta^*_0
x^1-\ta_0x^2\right)\left(y^0z^2+y^2z^0\right)\Bigr\}
\]
where $\frac{1}{2}\ta_0=F_{101}=F_{110}=-F_{202}=-F_{220}$ and
$\frac{1}{2}\ta^*_0=F_{102}=F_{120}=F_{201}=F_{210}$ with respect
to the orthonormal $\f$-basis $\left\{e_i\right\}_{i=0}^2$.

\subsection{Lie group as a manifold from the class
$\F_9\oplus\F_{10}$}

Let $L$ be a $(2n+1)$-dimensional real connected Lie group and its
associated Lie algebra with a global basis $\{E_{0},E_{1},\dots,
E_{2n}\}$ of left invariant vector fields on $L$ defined by
\begin{equation}\label{com}
    [E_0,E_i]=-a_iE_i-a_{n+i}E_{n+i},\quad
    [E_0,E_{n+i}]=-a_{n+i}E_i+a_{i}E_{n+i},
\end{equation}
where $a_1,\dots,a_{2n}$ are real constants and $[E_j,E_k]=0$ in
other cases.
Let an invariant almost contact structure be defined for
$i\in\{1,\dots,n\}$ by
\begin{equation}\label{strL}
\f E_0=0,\quad \f E_i=E_{n+i},\quad \f E_{n+i}=- E_i,\quad \xi=
E_0,\quad \eta(E_0)=1,\quad \eta(E_i)=\eta(E_{n+i})=0.
\end{equation}
Let $g$ be a pseudo-Riemannian metric such that for
$i\in\{1,\dots,n\}$ and $j, k \in\{1,\dots,2n\}$, $j\neq k$ the
following equalities are valid
\begin{equation}\label{gL}
  g(E_0,E_0)=g(E_i,E_i)=-g(E_{n+i},E_{n+i})=1, \quad
  g(E_0,E_j)=g(E_j,E_k)=0.
\end{equation}
Thus, because of \eqref{str}, the induced $(2n+1)$-dimensional
manifold $(L,\f, \xi, \eta, g)$ is an almost contact B-metric
manifold.

Let us remark that in \cite{Ol} the same Lie group with the same
almost contact structure but equipped with a compatible Riemannian
metric is studied as an almost cosymplectic manifold.

Let us consider the constructed almost contact B-metric manifold
$(L,\f, \xi, \eta, g)$ in dimension 3, \ie for $n=1$.

By virtue of \eqref{com} and \eqref{gL} for $n=1$,  and using the
Koszul equality
\begin{equation}\label{Kosz}
2g\left(\n_{E_i}E_j,E_k\right)
=g\left([E_i,E_j],E_k\right)+g\left([E_k,E_i],E_j\right)
+g\left([E_k,E_j],E_i\right)
\end{equation}
for the Levi-Civita connection $\n$ of $g$, we obtain
\begin{equation}\label{nEi}
    \n_{E_1}E_1=\n_{E_2}E_2=-a_1E_0,\quad
    \n_{E_0}E_1=-a_2E_2,\quad \n_{E_0}E_2=-a_2E_1,\quad
    \n_{E_1}E_0=a_1E_1,\quad \n_{E_2}E_0=-a_1E_2.
\end{equation}

Then, using the latter equalities, \eqref{strL} and \eqref{F=nfi},
we get the following nonzero components $F_{ijk}=F(E_i,E_j,E_k)$
of the structure tensor:
\[
F_{011}=F_{022}=-2a_2,\quad F_{102}=F_{120}=-F_{201}=-F_{210}=a_1.
\]
Thus, we establish the following form of $F$ for arbitrary vectors
$x=x^iE_i$, $y=y^iE_i$, $z=z^iE_i$
\[
F(x,y,z)=-2a_2x^0\left(y^1z^1+y^2z^2\right)
+a_1\left\{z^0\left(x^1y^2-x^2y^1\right)+y^0\left(x^1z^2-x^2z^1\right)\right\}.
\]
The latter equality implies that $F$ is represented in the form
\[
F(x,y,z)=F_9(x,y,z)+F_{10}(x,y,z),
\]
bearing in mind \eqref{Fi3} for $\mu=a_1$, $\nu=-2a_2$; or
alternatively, the corresponding equalities from \eqref{F4-9} and
\eqref{F1011}. Therefore, we prove that the constructed
3-dimensional manifold belongs to the class $\F_9\oplus\F_{10}$.


\end{document}